\newtheorem{theorem}{Theorem}[section]
\newtheorem{lemma}[theorem]{Lemma}
\newtheorem{proposition}[theorem]{Proposition}
\newtheorem{corollary}[theorem]{Corollary}
\theoremstyle{definition}
\newtheorem{definition}[theorem]{Definition}
\theoremstyle{remark}
\newtheorem{remark}[theorem]{Remark}
\numberwithin{equation}{section}
\newcommand{\abs}[1]{\left|#1\right|}
\newcommand{\tr}{\textup{tr}}
\newcommand{\Ric}{\textup{Ric}}
\newcommand{\Div}{\textup{div}}
\newcommand{\Id}{\textup{Id}}
\newcommand{\eval}[2]{\left. #1 \right|_{#2}}
\newcommand{\R}{\mathbb{R}}
\newcommand{\C}{\mathbb{C}}
\newcommand{\K}{\mathcal{K}}
\renewcommand{\DH}{\mathcal{DH}}
\renewcommand{\H}{\mathcal{H}}
\newcommand{\ddbar}{\partial \bar{\partial}}
\begin{document}
\title{Boltzmann's Entropy and K\"ahler-Ricci Solitons}

\author{Frederick Tsz-Ho Fong}
\address{Department of Mathematics, School of Science, Hong Kong University of Science and Technology, Clear Water Bay, Hong Kong}
\email{frederick.fong@ust.hk}

\subjclass[2010]{Primary 53C44; Secondary 35C08}
\date{May 26, 2016}
\begin{abstract}
We study a Boltzmann's type entropy functional (which appeared in existing literature) defined on K\"ahler metrics of a fixed K\"ahler class. The critical points of this functional are gradient K\"ahler-Ricci solitons, and the functional was known to be monotonically increasing along the K\"ahler-Ricci flow in the canonical class.

In this article, we derive and analyze the second variation formula for this entropy functional, and show that all gradient K\"ahler-Ricci solitons are stable with respect to this entropy functional. Furthermore, using this result, we give a new proof that gradient shrinking K\"ahler-Ricci solitons are stable with respect to the Perelman's entropy in a fixed K\"ahler class.
\end{abstract}
\maketitle
\section{Introduction}
In this article, we examine an entropy functional defined on the space of K\"ahler potentials of a compact K\"ahler manifold whose first Chern class has a definite sign. This functional, which will be denoted by $\H$ in this article, appeared in some existing literature related to the K\"ahler-Ricci flow and solitons.

Let $X^n$ be a compact K\"ahler manifold whose first Chern class $c_1(X)$ has a definite sign denoted by $\lambda \in \R$. Let $\omega_0$ be a K\"ahler metric such that $\lambda \omega_0 \in c_1(X)$. Denote the space of K\"ahler potentials by:
\[\K = \{\varphi \in C^{\infty}(X, \R) : \omega_0 + \sqrt{-1}\ddbar\varphi > 0\}.\]
Given any $\varphi \in \K$, we denote $\omega_\varphi := \omega_0 + \sqrt{-1}\ddbar\varphi$. Since $\lambda[\omega_\varphi] = \lambda[\omega_0] = c_1(X) = [\Ric(\omega_\varphi)]$, the $\ddbar$-lemma asserts that there exists a unique smooth function $f_{\varphi}$ on $X$ such that:
\begin{align*}
\lambda\omega_\varphi - \Ric(\omega_\varphi) & = \sqrt{-1}\ddbar f_\varphi; \text{ and }\\
\int_X e^{-f_\varphi} \omega_\varphi^n & = \int_X \omega_\varphi^n.
\end{align*}
Note that $\omega_\varphi$ is in the same K\"ahler class as $\omega_0$, and therefore the total volume
\[[\omega_\varphi]^n := \int_X\omega_\varphi^n\]
is independent of $\varphi \in \K$. We define the entropy functional $\H : \K \to \R$ by:
\begin{equation}
\label{eq:H}
\H(\varphi) := \frac{1}{[\omega_0]^n}\int_X f_\varphi e^{-f_{\varphi}} \omega_\varphi^n.
\end{equation}

This $\H$-functional can be expressed as a special form of Boltzmann's entropy in statistical thermodynamics. We first rewrite $f_\varphi$ as:
\begin{align*}
f_\varphi & = -\log\left(e^{-f_\varphi}\right) = -\log\left(\frac{e^{-f_\varphi}\omega_\varphi^n}{\omega_\varphi^n}\right)\\
& = -\log\left(\frac{e^{-f_\varphi}\omega_\varphi^n}{[\omega_\varphi]^n} \Bigg/ \frac{\omega_\varphi^n}{[\omega_\varphi]^n}\right)
\end{align*}
Denote $d\nu_\varphi := \displaystyle{\frac{e^{-f_\varphi}\omega_\varphi^n}{[\omega_\varphi]^n}}$ and $d\mu_\varphi := \displaystyle{\frac{\omega_\varphi^n}{[\omega_\varphi]^n}}$. Note that both $d\nu_\varphi$ and $d\mu_\varphi$ are probability measures on $X$. Under these notations, the $\H$-functional can be written as:
\[\H(\varphi) = -\int_X \frac{d\nu_\varphi}{d\mu_\varphi} \log\left(\frac{d\nu_\varphi}{d\mu_\varphi}\right)d\mu_\varphi = -\int_X \log\left(\frac{d\nu_\varphi}{d\mu_\varphi}\right)d\nu_\varphi\]
which is the Boltzmann's entropy with respect to the two measures $d\nu_\varphi$ and $d\mu_\varphi$.

On a compact Riemannian manifold $(M,g)$, Lott and Villani studied in \cite{LV} the Boltzmann's entropy (different from $\H$ in this article) of the form:
\[H_{d\mu_0}(d\nu_t) := \int \frac{d\nu_t}{d\mu_0}\log\left(\frac{d\nu_t}{d\mu_0}\right)d\mu_0,\]
where $d\mu_0$ is a fixed measure and $d\nu_t$ is a \emph{geodesic} path of measures (absolutely continuous with respect to $d\mu_0$) in the Wasserstein space $\mathcal{P}_2(M)$. They showed that this entropy is convex (i.e. $\frac{d^2}{dt^2}H_{d\mu_0}(d\nu_t) \geq 0$) for any geodesic paths $d\nu_t$ if and only if $(M, g)$ has non-negative Ricci curvature.

Concerning our $\H$-functional defined in \eqref{eq:H}, there are several interesting results about it in the subject of the K\"ahler-Ricci flow. In \cite[Section 6]{PSSW} (see also \cite{He}), it was proved that the $\H$-functional\footnote{In \cite{PSSW}, the letter $b$ was used to denote the $\H$-functional. In \cite{He}, the author adopted a different sign convention for this functional.} is monotonically increasing along the normalized K\"ahler-Ricci flow $\frac{\partial\omega}{\partial t} = -\Ric(\omega) + \lambda\omega$ for the case $c_1(X) > 0$ assuming the initial metric $\omega_0 \in \frac{1}{\lambda}c_1(X)$. In fact it is also true in the cases of $c_1(X) = 0$ and $c_1(X) < 0$ (see the discussion in P.\pageref{KRFMonotone} in this article). Furthermore, the critical points of this functional are K\"ahler potentials $\varphi$ such that $\omega_\varphi$ is a gradient K\"ahler-Ricci soliton, meaning that:
\[\Ric(\omega_\varphi) + \nabla^2 f_{\varphi} = \lambda\omega_\varphi\]
and so $\nabla f_\varphi$ is a real holomorphic vector field. In particular, if $f_\varphi$ is a constant function, then $\omega_\varphi$ is a K\"ahler-Einstein metric.

This $\H$-functional also plays a role in the unpublished result due to Perelman (see \cite{SeT, Cao13}) in which he proved that the diameter and scalar curvature are uniformly bounded along the normalized K\"ahler-Ricci flow $\frac{\partial\omega}{\partial t} = -\Ric(\omega) + \lambda\omega$ starting with $\omega_0 \in \frac{1}{\lambda} c_1(X) > 0$. In Section 2 of \cite{SeT} and Section 6 of \cite{Cao13}, monotonicity of Perelman's $\mathcal{W}$-functional was used to show that the Ricci potential $f_\varphi$ is uniformly bounded from below along the normalized K\"ahler-Ricci flow. In fact, a uniform bound for $\H$ is sufficient to prove a uniform lower bound for $f_\varphi$ along the flow. The lower bound of $\H$ follows from the monotonicity along the K\"ahler-Ricci flow, and the upper bound can be deduced using Jensen's inequality. Both are easier to obtain than the analogous results of the $\mathcal{W}$-functional. 

In \cite[Proposition 2.2]{He}, the author derived an upper bound for $\H$ in relation to the maximal compact subgroup of $\text{Aut}(M,\omega_0)$. Using this result, it was proved in \cite[Corollary 2.6]{He} that if a K\"ahler-Ricci soliton exists, then it maximizes Perelman's $\mu$-functional.

In the recent article \cite{D15}, Donaldson also pointed out that the $\H$-functional and the Ding's functional $\mathcal{F}$ introduced in \cite{Ding88} are related by $\frac{\partial\mathcal{F}}{\partial t} \leq \H$ along the normalized K\"ahler-Ricci flow starting from $\omega_0 \in \frac{1}{\lambda}c_1(X)$.

We are going to explore this $\H$-functional further in this article by deriving and analyzing the \emph{second variation formula} for $\H$. For the Perelman's $\nu$-functional introduced in \cite{P}, the Euler-Lagrange's equation gives gradient shrinking Ricci solitons as its critical points. The second variation of $\nu$ was discussed and computed in \cite{CHI, CM} and a notion of stability of shrinking Ricci solitons was developed using the second variation formula. Various works concerning about stability of shrinking Ricci solitons can be found in e.g. \cite{HM11, HM13, Kr}. Analogously, from the second variation formula for $\H$, we introduce a stability operator $\mathcal{S}_f$ and a notion of $\H$-stability for gradient K\"ahler-Ricci solitons. Our main result is that any critical point (i.e. gradient K\"ahler-Ricci solitons) is $\H$-stable:

\subsection*{Main Theorem}
Let $\varphi$ be a critical point of the functional $\H$ (so that $\omega_\varphi$ is a gradient K\"ahler-Ricci soliton), then for any $\psi \in T_\varphi\K$, we have:
\[\left.\frac{d^2}{dt^2}\right|_{t = 0}\H(\varphi + t\psi) \leq 0\]
and equality holds if and only if $\nabla \psi$ is a real holomorphic vector field.\qed

\vskip 0.5cm

From dynamical system viewpoint, the main theorem and the monotonicity of $\H$ along the K\"ahler-Ricci flow assert that gradient K\"ahler-Ricci solitons are ``attractors'', and $-\H$ acts as a Lyapunov function for the flow. It is well-known in \cite{Cao85} that the normalized K\"ahler-Ricci flow in the canonical class converges to K\"ahler-Einstein metrics when $c_1(X) = 0$ or $c_1(X) < 0$. In case of $c_1(X) > 0$, assuming the existence of a shrinking K\"ahler-Ricci soliton, the normalized K\"ahler-Ricci flow in the canonical class was shown in \cite{TZ07} to converge to the soliton under some invariant condition on the initial metric (see also \cite{PS06, PSSW08, Zh11, TZ13, TZZZ}). It is hoped that the main result of this article could bring more insight about the stability of the K\"ahler-Ricci flow when it approaches to the soliton limit.

In the case of $c_1(X) > 0$, the $\H$-functional is also related to the Perelman's $\mu$-functional, in a sense that $\H$ is an upper barrier of $\mu$ (up to addition of a constant) and that they coincide at gradient K\"ahler-Ricci solitons (again up to addition of a constant). Therefore, the main theorem of this article implies (see Proposition \ref{prop:PerelmanStability}) an earlier result in \cite{TZ08} that K\"ahler-Ricci solitons are $\mu$-stable in a fixed K\"ahler class, i.e. $\mu$-stable in the direction of complex Hessian of potential functions. This is an important result used in many works about the stability of K\"ahler-Ricci solitons and the convergence of the K\"ahler-Ricci flow (see e.g. \cite{TZ08, SW,Zhe,Z13}).

\subsection*{Acknowledgement}
The author would like to thank Zhou Zhang and Kai Zheng for some enlightening discussions and for their interests in this work. The author is supported by HKUST New-Faculty Initiation Grant IGN15SC04 and HKUST Start-up Grant R9353.

\newpage
\section{Ricci Potential and its Variation}
In this section, we study the function $f_\varphi$ (commonly called the Ricci potential of $\omega_\varphi$) and derive the evolution equation for $f_\varphi$ which will be used often later on. To begin, we express the Ricci potential in a more explicit way in terms of the Ricci potential of $\omega_0$:
\begin{align*}
\lambda\omega_\varphi - \Ric(\omega_\varphi) & = \sqrt{-1}\ddbar f_\varphi & \Longrightarrow & \quad \lambda\omega_0 + \lambda \sqrt{-1}\ddbar \varphi + \sqrt{-1}\ddbar \log\omega_\varphi^n = \sqrt{-1}\ddbar f_\varphi\\
\lambda\omega_0 - \Ric(\omega_0) & = \sqrt{-1}\ddbar f_0 & \Longrightarrow & \quad \lambda\omega_0 + \sqrt{-1}\ddbar\log\omega_0^n = \sqrt{-1}\ddbar f_0
\end{align*}
and so by subtraction, we have:
\[\sqrt{-1}\ddbar \left(\lambda\varphi + \log\frac{\omega_\varphi^n}{\omega_0^n}\right) = \sqrt{-1}\ddbar\left(f_\varphi - f_0\right).\]
Therefore, $\displaystyle{\lambda\varphi + \log\frac{\omega^n_\varphi}{\omega^n_0} - (f_\varphi - f_0)}$ is a constant on $X$. Using the normalization conditions:
\[\int_X e^{-f_\varphi}\omega_\varphi^n = \int_X e^{-f_0}\omega_0^n = [\omega_0]^n,\]
one can determine this constant and show that the Ricci potential $f_\varphi$ is given by:\begin{equation}
\label{eq:f_varphi}
f_\varphi = f_0 + \log\frac{\omega_\varphi^n}{\omega_0^n} + \lambda\varphi + \log\left(\frac{1}{[\omega_0]^n}\int_X e^{-f_0 - \lambda\varphi}\omega_0^n\right).
\end{equation}

\begin{lemma}[Evolution Equation of $f_\varphi$]
\label{lma:VarF}
Let $\varphi(t)$ be a 1-parameter smooth family of K\"ahler potentials in $\K$, where $t \in (-\varepsilon, \varepsilon)$. Denote $\psi := \frac{\partial\varphi}{\partial t}$, then the Ricci potential $f_{\varphi(t)}$ evolves by:
\begin{equation}
\label{eq:dfdt}
\frac{\partial f_{\varphi}}{\partial t} = \Delta\psi + \lambda\left(\psi - \frac{1}{[\omega_0]^n}\int_X \psi e^{-f_\varphi} \omega_\varphi^n\right),
\end{equation}
where $\Delta := \Delta_\varphi$ is the Laplacian with respect to $\omega_{\varphi(t)}$.
\end{lemma}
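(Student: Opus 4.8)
The plan is to differentiate the explicit formula \eqref{eq:f_varphi} for $f_\varphi$ term by term in $t$, since each term there has transparent $t$-dependence. The term $f_0$ does not depend on $\varphi$, hence not on $t$. For $\log(\omega_\varphi^n/\omega_0^n)$ I will use the standard Kähler identity $\frac{\partial}{\partial t}\log\omega_\varphi^n = \Delta_\varphi\psi$, which follows from $\frac{\partial}{\partial t}\omega_\varphi = \sqrt{-1}\ddbar\psi$ and differentiating $\log\det(g_{i\bar j}+\partial_i\partial_{\bar j}\varphi)$ — here $\Delta_\varphi$ is the complex Laplacian $g_\varphi^{i\bar j}\partial_i\partial_{\bar j}$, consistent with the coefficient $1$ in \eqref{eq:dfdt}. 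The term $\lambda\varphi$ contributes $\lambda\psi$. It remains only to differentiate the normalizing term $\log\left(\frac{1}{[\omega_0]^n}\int_X e^{-f_0-\lambda\varphi}\omega_0^n\right)$.

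For that term, write $A(t):=\frac{1}{[\omega_0]^n}\int_X e^{-f_0-\lambda\varphi}\omega_0^n$, a function of $t$ alone, so $\frac{\partial}{\partial t}\log A = A'(t)/A(t)$ with $A'(t) = -\frac{\lambda}{[\omega_0]^n}\int_X \psi\, e^{-f_0-\lambda\varphi}\omega_0^n$. The key observation is that \eqref{eq:f_varphi}, after exponentiating, reads $e^{-f_\varphi}\omega_\varphi^n = A(t)^{-1}e^{-f_0-\lambda\varphi}\omega_0^n$, i.e. $e^{-f_0-\lambda\varphi}\omega_0^n = A(t)\,e^{-f_\varphi}\omega_\varphi^n$; substituting this into $A'(t)$ and dividing by $A(t)$ gives $\frac{\partial}{\partial t}\log A = -\frac{\lambda}{[\omega_0]^n}\int_X \psi\, e^{-f_\varphi}\omega_\varphi^n$. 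Summing the four contributions yields exactly \eqref{eq:dfdt}.

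A second, more conceptual route avoids \eqref{eq:f_varphi} altogether: differentiate the defining relation $\lambda\omega_\varphi - \Ric(\omega_\varphi) = \sqrt{-1}\ddbar f_\varphi$ in $t$, using $\frac{\partial}{\partial t}\Ric(\omega_\varphi) = -\sqrt{-1}\ddbar(\Delta_\varphi\psi)$, to obtain $\sqrt{-1}\ddbar\left(\frac{\partial f_\varphi}{\partial t}-\Delta_\varphi\psi-\lambda\psi\right)=0$; compactness of $X$ then forces $\frac{\partial f_\varphi}{\partial t} = \Delta_\varphi\psi+\lambda\psi+c(t)$ for some $c(t)$ depending only on $t$. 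To pin down $c(t)$, differentiate the normalization $\int_X e^{-f_\varphi}\omega_\varphi^n = [\omega_0]^n$ in $t$, again using $\frac{\partial}{\partial t}\omega_\varphi^n = (\Delta_\varphi\psi)\,\omega_\varphi^n$; the $\Delta_\varphi\psi$ terms cancel and one is left with $c(t)[\omega_0]^n = -\lambda\int_X \psi\, e^{-f_\varphi}\omega_\varphi^n$, which is \eqref{eq:dfdt}.

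There is no genuine obstacle here: the computation is essentially mechanical once \eqref{eq:f_varphi} is in hand. The only points needing a little care are the convention for $\Delta_\varphi$ and the sign in $\frac{\partial}{\partial t}\Ric(\omega_\varphi) = -\sqrt{-1}\ddbar\Delta_\varphi\psi$ (equivalently $\Ric(\omega_\varphi) = -\sqrt{-1}\ddbar\log\omega_\varphi^n$), both standard. I would present the first route, since \eqref{eq:f_varphi} has just been derived and makes the identification of each term immediate.
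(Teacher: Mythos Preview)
Your proposal is correct. Both routes you outline are valid; in fact your \emph{second} route is precisely the argument the paper gives: differentiate the defining relation $\lambda\omega_\varphi-\Ric(\omega_\varphi)=\sqrt{-1}\ddbar f_\varphi$, invoke compactness to reduce to $\partial_t f_\varphi=\Delta_\varphi\psi+\lambda\psi+c(t)$, and then fix $c(t)$ by differentiating the normalization $\int_X e^{-f_\varphi}\omega_\varphi^n=[\omega_0]^n$. Your preferred \emph{first} route---differentiating the explicit expression \eqref{eq:f_varphi} term by term and recognizing $e^{-f_0-\lambda\varphi}\omega_0^n=A(t)\,e^{-f_\varphi}\omega_\varphi^n$ to rewrite $A'(t)/A(t)$---is a legitimate alternative that exploits the formula just established; it is slightly more direct here but less self-contained, since it relies on \eqref{eq:f_varphi} rather than only the defining properties of $f_\varphi$. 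Either presentation is fine for a lemma of this nature.
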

\begin{proof}
Recall that $f_{\varphi}$ is defined by $\lambda\omega_{\varphi} - \Ric(\omega_{\varphi}) = \sqrt{-1}\ddbar f_{\varphi}$. Therefore, we have:
\[\lambda\left(\omega_0 + \sqrt{-1}\ddbar\varphi\right) + \sqrt{-1}\ddbar\log\det(\omega_0 + \sqrt{-1}\ddbar\varphi) = \sqrt{-1}\ddbar f_{\varphi}.\]
Differentiating both sides with respect to $t$, we get:
\[\sqrt{-1}\ddbar\left(\lambda\psi\right) + \sqrt{-1}\ddbar\left(\tr_{\omega_{\varphi}}\sqrt{-1}\ddbar\psi\right) = \sqrt{-1}\ddbar\left(\frac{\partial f_{\varphi}}{\partial t}\right).\]
Since $X$ is compact, we have:
\begin{equation}
\label{eq:dfdt_intermediate}
\lambda\psi + \Delta\psi + c(t) = \frac{\partial f_{\varphi}}{\partial t} 
\end{equation}
where $c(t)$ is a function of $t$ only to be determined. By the normalization condition on $f_\varphi$, we know:
\begin{align*}
0 & = \frac{d}{dt}\int_X e^{-f_{\varphi}}\omega_{\varphi}^n\\
& = \int_X \left(-\frac{\partial f_{\varphi}}{\partial t} + \Delta\psi\right)e^{-f_{\varphi}}\omega_{\varphi}^n
\end{align*}
where we have used the fact that $\frac{\partial}{\partial t}\log\omega_{\varphi}^n = \tr_{\omega_{\varphi}}\frac{\partial}{\partial t}\omega_{\varphi} = \tr_{\omega_{\varphi}}\sqrt{-1}\ddbar\psi = \Delta\psi$. Combining with \eqref{eq:dfdt_intermediate}, we have:
\[\int_X (\lambda\psi + c) e^{-f_{\varphi}} \omega_{\varphi}^n = 0\]
and so using the normalization condition on $f_\varphi$, we can complete the proof of the lemma by observing that:
\[c(t) = -\frac{\lambda}{[\omega_0]^n}\int_X \psi e^{-f_{\varphi}}\omega_{\varphi}^n.\]
\end{proof}

\section{Critical Points of $\H$}
The first variation of $\H$ and the Euler-Lagrange's equation have been studied in \cite{PSSW, He} when $c_1(X) > 0$, in which the critical points of $\H$ were known to be K\"ahler-Einstein metrics and more generally (shrinking) gradient K\"ahler-Ricci solitons. The cases of $c_1(X) = 0$ and $c_1(X) < 0$ can be proved in similar ways. We include the detail below for easy reference.

\begin{proposition}[First Variation of $\H$]
\label{prop:FirstVarH}
The first variation of $\H$ along a 1-parameter smooth family $\varphi(t)$ of K\"ahler potentials in $\K$ such that $\frac{\partial\varphi}{\partial t} = \psi$ is given by:
\begin{equation}
\label{eq:FirstVarH}
\frac{d}{dt}\H(\varphi) = -\frac{1}{[\omega_0]^n} \int_X \psi\,\left[\Delta f_\varphi - \abs{\nabla f_\varphi}^2 + \lambda\left(f_\varphi - \H(\varphi)\right)\right]\,e^{-f_\varphi}\omega_\varphi^n
\end{equation}
\end{proposition}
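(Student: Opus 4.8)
The plan is to differentiate the defining integral \eqref{eq:H} for $\H$ directly, feeding in the evolution equation for $f_\varphi$ from Lemma~\ref{lma:VarF} together with the elementary identity $\frac{\partial}{\partial t}\omega_\varphi^n = (\Delta\psi)\,\omega_\varphi^n$ (already noted in the proof of that lemma). Writing $A(t) := \frac{1}{[\omega_0]^n}\int_X\psi\,e^{-f_\varphi}\omega_\varphi^n$ for the global constant appearing in \eqref{eq:dfdt}, and using $\frac{\partial}{\partial t}\bigl(f_\varphi e^{-f_\varphi}\bigr) = (1-f_\varphi)e^{-f_\varphi}\,\frac{\partial f_\varphi}{\partial t}$, the first step produces
\[
[\omega_0]^n\,\frac{d}{dt}\H(\varphi) = \int_X\Bigl[(1-f_\varphi)\,\frac{\partial f_\varphi}{\partial t} + f_\varphi\,\Delta\psi\Bigr]e^{-f_\varphi}\omega_\varphi^n .
\]

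Next I would substitute $\frac{\partial f_\varphi}{\partial t} = \Delta\psi + \lambda(\psi - A)$ from Lemma~\ref{lma:VarF}. The convenient cancellation here is that the two $\Delta\psi$ contributions combine, $(1-f_\varphi)\Delta\psi + f_\varphi\Delta\psi = \Delta\psi$, leaving
\[
[\omega_0]^n\,\frac{d}{dt}\H(\varphi) = \int_X\Delta\psi\,e^{-f_\varphi}\omega_\varphi^n + \lambda\int_X(1-f_\varphi)(\psi - A)\,e^{-f_\varphi}\omega_\varphi^n .
\]
For the first integral I would integrate by parts, using that $\Delta = \tr_{\omega_\varphi}\sqrt{-1}\ddbar$ is self-adjoint with respect to $\omega_\varphi^n$ and that $\Delta(e^{-f_\varphi}) = (\abs{\nabla f_\varphi}^2 - \Delta f_\varphi)e^{-f_\varphi}$, to rewrite it as $\int_X\psi\bigl(\abs{\nabla f_\varphi}^2 - \Delta f_\varphi\bigr)e^{-f_\varphi}\omega_\varphi^n$.

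It then remains to unwind the second integral. Expanding it, $\int_X(\psi - A)\,e^{-f_\varphi}\omega_\varphi^n = 0$ by the very definition of $A$ together with the normalization $\int_X e^{-f_\varphi}\omega_\varphi^n = [\omega_0]^n$, so only $-\lambda\int_X f_\varphi(\psi - A)\,e^{-f_\varphi}\omega_\varphi^n$ survives; in this the $A$-term becomes $\lambda A\int_X f_\varphi e^{-f_\varphi}\omega_\varphi^n = \lambda\,\H(\varphi)\int_X\psi\,e^{-f_\varphi}\omega_\varphi^n$ by the definitions of $\H$ and $A$. Collecting the surviving pieces gives exactly \eqref{eq:FirstVarH}. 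The only genuinely delicate point is the bookkeeping of the constant $A$: it is precisely what forces $\H(\varphi)$ itself (rather than merely $f_\varphi$) to appear inside the bracket, through the identity $\int_X f_\varphi e^{-f_\varphi}\omega_\varphi^n = [\omega_0]^n\,\H(\varphi)$. Beyond that, this is a routine first-variation computation with no analytic subtleties, $X$ being compact.
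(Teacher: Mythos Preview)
Your proof is correct and follows essentially the same route as the paper's: differentiate the integrand using Lemma~\ref{lma:VarF} and $\partial_t\omega_\varphi^n=(\Delta\psi)\omega_\varphi^n$, exploit the cancellation $(1-f_\varphi)\Delta\psi+f_\varphi\Delta\psi=\Delta\psi$, kill the constant piece via $\int_X(\psi-A)e^{-f_\varphi}\omega_\varphi^n=0$, and integrate $\int_X(\Delta\psi)e^{-f_\varphi}\omega_\varphi^n$ by parts. The only cosmetic difference is that the paper does the integration by parts in two steps (first to $\langle\nabla\psi,\nabla f_\varphi\rangle$, then back to $\psi$), whereas you move $\Delta$ across in one step via $\Delta(e^{-f_\varphi})=(\abs{\nabla f_\varphi}^2-\Delta f_\varphi)e^{-f_\varphi}$; the content is identical.
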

\begin{proof}
Recall that:
\[\H(\varphi) = \frac{1}{[\omega_0]^n}\int_X f_\varphi e^{-f_\varphi}\omega_\varphi^n.\]
By direct computations with the use of \eqref{eq:f_varphi}, we get:
\begin{align*}
\frac{d}{dt}\H(\varphi) & = \frac{1}{[\omega_0]^n}\int_X \left(\frac{\partial f_\varphi}{\partial t} - f_\varphi \frac{\partial f_\varphi}{\partial t} + f_\varphi \Delta\psi\right)e^{-f_\varphi} \omega_\varphi^n\\
& = \frac{1}{[\omega_0]^n}\int_X \left[\left(\Delta\psi + \lambda\left(\psi - \frac{\int_X\psi e^{-f_\varphi}\omega_\varphi^n}{[\omega_0]^n}\right)\right)\right.\\
& \qquad\qquad\qquad \left. - f_\varphi \left(\Delta\psi + \lambda\left(\psi - \frac{\int_X \psi e^{-f_\varphi}\omega_\varphi^n}{[\omega_0]^n}\right)\right) + f_\varphi\Delta\psi\right] e^{-f_\varphi}\omega_\varphi^n\\
& = \frac{1}{[\omega_0]^n} \left(\int_X \left(\Delta\psi - \lambda f_\varphi\psi\right)e^{-f_\varphi}\omega_\varphi^n + \lambda\H(\varphi)\int_X\psi e^{-f_\varphi}\omega_\varphi^n\right)
\end{align*}
Here we have used the fact that:
\[\int_X \left(\psi - \frac{\int_X\psi e^{-f_\varphi}\omega_\varphi^n}{[\omega_0]^n}\right) e^{-f_\varphi}\omega_\varphi^n = 0.\]
Through integration-by-parts, we get:
\begin{align*}
\int_X (\Delta\psi) e^{-f_\varphi}\omega_\varphi^n & = -\int_X \langle \nabla\psi, -\nabla f_\varphi\rangle e^{-f_\varphi} \omega_\varphi^n\\
& = -\int_X \psi \left(\Delta f_\varphi - \abs{\nabla f_\varphi}^2\right) e^{-f_\varphi}\omega_\varphi^n.
\end{align*}
Combining with the previous result, we have proved:
\begin{equation*}
\frac{d}{dt}\H(\varphi) = -\frac{1}{[\omega_0]^n}\int_X \psi \left[\Delta f_\varphi - \abs{\nabla f_\varphi}^2 + \lambda\left(f_\varphi - \H(\varphi)\right)\right] e^{-f_\varphi}\omega_\varphi^n
\end{equation*}
as desired.
\end{proof}

\begin{definition}[$L^2$-gradient of $\H$]
In view of Proposition \ref{prop:FirstVarH}, we denote
\[\DH(\varphi) := -\left[\Delta f_\varphi - \abs{\nabla f_\varphi}^2 + \lambda\left(f_\varphi - \H(\varphi)\right)\right]\]
which stands for the $L^2$-gradient of $\H$ in the space $\K$ with respect to the measure $\frac{1}{[\omega_0]^n}e^{-f_\varphi}\omega_\varphi^n$. Then along $\frac{\partial\varphi}{\partial t} = \psi$, we have:
\[\frac{\partial}{\partial t}\H(\varphi) = \frac{1}{[\omega_0]^n}\int_X \psi\,\DH(\varphi)\,e^{-f_\varphi}\omega_\varphi^n.\]
\end{definition}

For simplicity, we will occasionally denote $f := f_\varphi$ whenever $\varphi$ can be understood from the context. Next we introduce three linear operators on $C^\infty(X, \C)$:  given any $\psi \in C^{\infty}(X,\C)$, we define:
\begin{align*}
L_f \psi & := \Delta \psi - \nabla^i f \, \nabla_i\psi = \Delta\psi - g^{i\bar j}\,\nabla_{\bar j}f\,\nabla_i\psi,\\
\bar{L}_f \psi & := \Delta \psi - \nabla^{\bar i} f \, \nabla_{\bar i} \psi = \Delta\psi - g^{j\bar i}\,\nabla_j f\,\nabla_{\bar i} \psi,\\
\Delta_f \psi & := \Delta\psi - \frac{1}{2}\left(g^{i\bar j}\,\nabla_{\bar j}f\,\nabla_i\psi + g^{j\bar i}\,\nabla_j f\,\nabla_{\bar i} \psi\right)
\end{align*}
where $\Delta$, $\nabla$ and the inner product $\langle \cdot , \cdot \rangle$ are taken with respect to the metric $\omega_\varphi$. It is clear that $\Delta_f \psi = \frac{1}{2}\left(L_f + \bar{L}_f\right)\psi$, and $\overline{L_f \psi} = \bar{L}_f \overline{\psi}$, and in particular for real-valued functions $\psi$, we have $\overline{L_f \psi} = \bar{L}_f \psi$.

Furthermore, it is helpful to note that $L_f f = \bar{L}_f f = \Delta_f f = \Delta f - \abs{\nabla f}^2$, and so the $L^2$-gradient of $\H$ can be written in three equivalent ways as:

\begin{align*}
\DH(\varphi) & = -\left(\Delta_f + \lambda\,\textup{Id}\right)\left(f-\H\right) \\
& = -\left(L_f + \lambda\,\textup{Id}\right)\left(f-\H\right)\\
& = -\left(\bar{L}_f + \lambda\,\textup{Id}\right)\left(f-\H\right).
\end{align*}

All three of $L_f$, $\bar{L}_f$ and $\Delta_f$ are (complex) self-adjoint operators on $C^\infty(X, \C)$ with respect to the inner product:
\begin{equation}
\label{eq:WeightedInnerProduct}
(\psi_1, \psi_2)_f := \frac{1}{[\omega_0]^n} \int_X \psi_1 \bar{\psi}_2 e^{-f_\varphi}\omega^n_\varphi
\end{equation}
in a sense that $(L_f(\psi_1), \psi_2)_f = (\psi_1, L_f(\psi_2))_f$ and similarly for $\bar{L}_f$ and $\Delta_f$. Therefore, their eigenvalues are real.

By a standard argument (see e.g. \cite{F, PSSW}), it can be shown that when acting on the orthogonal complement of constant functions, both $-L_f$ and $-\bar{L}_f$ (and hence for $-\Delta_f$) have the lowest eigenvalue $\geq \lambda$. Due to its importance to our upcoming discussions, we state the result below and sketch its proof:

\begin{lemma}[c.f. \cite{F, PSSW}]
\label{lma:Eigenvalue}
Given any non-constant function $\psi \in C^{\infty}(X, \C)$ of $-L_f$ such that:
\[L_f \psi = -\mu \psi,\]
then we have $\mu \geq \lambda$, and equality holds if and only if $\nabla^{1,0}\bar{\psi} := g^{i\bar{j}}\frac{\partial\bar{\psi}}{\partial \bar{z}_j}\frac{\partial}{\partial z_i}$ is a holormophic vector field.
\end{lemma}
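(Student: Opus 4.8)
The proof is the weighted Bochner argument that is the analogue, for the weighted operator $L_f$ and the measure $\tfrac{1}{[\omega_0]^n}e^{-f}\omega_\varphi^n$, of the classical Lichnerowicz–Matsushima lower eigenvalue bound; the only inputs are integration by parts and the defining identity of the Ricci potential, $R_{j\bar k}+\nabla_j\nabla_{\bar k}f=\lambda g_{j\bar k}$, which holds for every $\varphi\in\K$ (not merely at critical points). First I would record that $L_f$ is in divergence form: since the metric is K\"ahler (so mixed covariant derivatives of functions commute), $L_f\psi=\Delta\psi-g^{i\bar j}\nabla_{\bar j}f\,\nabla_i\psi=e^{f}g^{i\bar j}\nabla_{\bar j}\bigl(e^{-f}\nabla_i\psi\bigr)$. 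Pairing $L_f\psi=-\mu\psi$ with $\bar\psi$ in the inner product \eqref{eq:WeightedInnerProduct} and integrating by parts gives
\[\mu\,(\psi,\psi)_f=\frac{1}{[\omega_0]^n}\int_X g^{i\bar j}\,\nabla_i\psi\,\nabla_{\bar j}\bar\psi\,e^{-f}\omega_\varphi^n,\]
and in particular $\mu>0$ since $\psi$ is non-constant (if $\mu=0$ the right side forces $\nabla\psi\equiv 0$).

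The heart of the matter is the non-negative quantity
\[J:=\frac{1}{[\omega_0]^n}\int_X g^{i\bar k}g^{j\bar l}\,(\nabla_i\nabla_j\psi)(\nabla_{\bar k}\nabla_{\bar l}\bar\psi)\,e^{-f}\omega_\varphi^n\ \ge\ 0,\]
the weighted $L^2$-norm of the $(2,0)$-Hessian of $\psi$, which vanishes precisely when $\nabla_i\nabla_j\psi\equiv 0$; taking conjugates and using $\nabla_{\bar k}g^{i\bar j}=0$, this is precisely the condition that $\nabla^{1,0}\bar\psi$ be a holomorphic vector field. To evaluate $J$ I would integrate by parts once to move a $\nabla_{\bar k}$ off the second factor, then commute $g^{i\bar k}\nabla_{\bar k}\nabla_i\nabla_j\psi=\nabla_j\Delta\psi+g^{i\bar k}R_{i\bar k j}{}^{m}\nabla_m\psi$, and finally use $R_{j\bar k}+\nabla_j\nabla_{\bar k}f=\lambda g_{j\bar k}$ to rewrite the term $g^{i\bar k}(\nabla_{\bar k}\nabla_j f)\nabla_i\psi$ as $\lambda\nabla_j\psi-g^{i\bar k}R_{i\bar k j}{}^{m}\nabla_m\psi$. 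The two Ricci terms cancel, leaving
\[g^{i\bar k}\bigl[\nabla_{\bar k}\nabla_i\nabla_j\psi-(\nabla_{\bar k}f)\nabla_i\nabla_j\psi\bigr]=\nabla_j\bigl(L_f\psi+\lambda\psi\bigr)=(\lambda-\mu)\,\nabla_j\psi,\]
and substituting back into the integrated-by-parts expression for $J$ yields $J=(\mu-\lambda)\,\tfrac{1}{[\omega_0]^n}\int_X g^{i\bar j}\nabla_i\psi\,\nabla_{\bar j}\bar\psi\,e^{-f}\omega_\varphi^n=(\mu-\lambda)\,\mu\,(\psi,\psi)_f$.

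Combining the two computations, $0\le J=(\mu-\lambda)\mu\,(\psi,\psi)_f$ with $\mu>0$ and $(\psi,\psi)_f>0$ forces $\mu\ge\lambda$, and $\mu=\lambda$ holds iff $J=0$, i.e. iff $\nabla_i\nabla_j\psi\equiv 0$, i.e. iff $\nabla^{1,0}\bar\psi$ is holomorphic. The only real obstacle is the bookkeeping in the Bochner step: one must fix a curvature sign convention carefully and verify that the Ricci term coming from the commutator $[\nabla_{\bar k},\nabla_i]$ and the Ricci term extracted from the Ricci-potential identity cancel exactly — this cancellation is what makes $\lambda$ the sharp constant — and one must keep in mind that $\psi$ is complex-valued, so $\overline{\nabla_i\nabla_j\psi}=\nabla_{\bar i}\nabla_{\bar j}\bar\psi$ and it is the $(2,0)$-part of the Hessian, not the full Hessian, that controls holomorphy of $\nabla^{1,0}\bar\psi$. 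No hard analysis enters, since $X$ is compact and all the data are smooth.
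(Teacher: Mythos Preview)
Your argument is correct and is essentially the same weighted Bochner/Lichnerowicz computation as in the paper's sketch: both differentiate the eigenvalue equation, use the Ricci-potential identity $R_{j\bar k}+\nabla_j\nabla_{\bar k}f=\lambda g_{j\bar k}$ to cancel the curvature term against the Hessian-of-$f$ term, and arrive at the key identity \eqref{eq:PoincareInequality}. The only cosmetic difference is that you begin from the integral $J$ and integrate by parts, while the paper first derives the pointwise identity $e^{f}g^{i\bar j}\nabla_{\bar j}\bigl(e^{-f}\nabla_k\nabla_i\psi\bigr)=(\lambda-\mu)\nabla_k\psi$ and then pairs with $\nabla_{\bar l}\bar\psi$; your extra step of recording $\mu>0$ and writing the right-hand side as $(\mu-\lambda)\mu(\psi,\psi)_f$ is unnecessary (since $\int|\nabla\psi|^2e^{-f}>0$ already suffices) but harmless.
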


\begin{proof}[Sketch of Proof]
Given that $L_f \psi = -\mu \psi$ for some non-constant $\psi \in C^{\infty}(X, \C)$, we have:
\[\Delta\psi - g^{i\bar j}\nabla_{\bar j} f \, \nabla_i \psi = -\mu \psi.\]
Then by differentiating both sides by $\nabla_k$, we get:
\[\nabla_k\Delta\psi - g^{i\bar j}\nabla_k \left(\nabla_{\bar j}f \, \nabla_i \psi\right) = -\mu\nabla_k\psi.\]
Using the commutative formula for covariant derivatives and the fact that $\lambda g_{i\bar j} - R_{i\bar j} = \nabla_i \nabla_{\bar j}f$, one can conclude:
\begin{align*}
g^{i\bar j}\nabla_{\bar j}\nabla_k\nabla_i\psi - \lambda\nabla_k\psi - g^{i\bar j}\nabla_k\nabla_i\psi \cdot \nabla_{\bar j}f & = -\mu\nabla_k\psi\\
\Longrightarrow \quad g^{i\bar j}\nabla_{\bar j}\left(e^{-f}\nabla_k\nabla_i\psi\right) & = (\lambda - \mu)\nabla_k\psi.
\end{align*}
Finally, by multiplying both sides by $g^{k\bar l}\nabla_{\bar l}\bar{\psi}$, integrating both sides over $X$ with respect to the measure $e^{-f}\omega_\varphi^n$ and using integration-by-parts, we get:
\begin{equation}
\label{eq:PoincareInequality}
\int_X \abs{\nabla\nabla \psi}^2 e^{-f} \omega_\varphi^n = (\mu - \lambda)\int_X \abs{\nabla\psi}^2 e^{-f}\omega_\varphi^n
\end{equation}
where $\abs{\nabla\nabla\psi}^2 = g^{k\bar{l}}g^{i\bar{j}}\left(\nabla_k\nabla_i\psi\right)\left(\nabla_{\bar j}\nabla_{\bar l}\bar{\psi}\right)$. Therefore, we must have $\mu \geq\lambda$, and equality holds if and only if $\nabla_{\bar j}\nabla_{\bar l}\bar{\psi} = 0$ for any $j$ and $l$, which is equivalent to saying that $\nabla^{1,0}\bar\psi$ is holomorphic.
\end{proof}

If $\varphi$ is a critical point of $\H$, i.e. $\DH(\varphi) = 0$, then $f_{\varphi}$ satisfies the Euler-Lagrange's equation:
\[\left(L_f + \lambda\,\Id\right)\left(f-\H\right) = 0\]
or equivalently, $f_\varphi - \H(\varphi)$ is an eigenfunction of $L_f$ with eigenvalue $\lambda$. By \eqref{eq:PoincareInequality}, $\nabla f_\varphi$ is then real holomorphic. Therefore, the critical potentials $\varphi$ of $\H(\varphi)$ are those which $\omega_\varphi$ is a gradient K\"ahler-Ricci soliton.

The K\"ahler-Ricci flow:
\begin{equation}
\label{eq:KRF}
\frac{\partial\omega}{\partial t} = -\Ric(\omega) + \lambda\omega, \quad \omega(0) = \omega_0
\end{equation}
in the canonical class, i.e. $\lambda\omega_0 \in c_1(X)$, can be regarded as the flow of K\"ahler potentials:
\begin{align}
\label{eq:MongeAmpere}
\frac{\partial\varphi}{\partial t} & = f_\varphi - \H(\varphi)\\
\nonumber & = f_0 + \log\frac{\omega_\varphi^n}{\omega_0^n} + \lambda\varphi - \frac{1}{[\omega_0]^n}\int_X e^{-f_0 -\lambda\varphi}\omega_0^n - \H(\varphi)
\end{align}
in a sense that if $\varphi(t)$ satisfies \eqref{eq:MongeAmpere}, then $\omega(t) := \omega_0 + \sqrt{-1}\ddbar\varphi(t)$ satisfies \eqref{eq:KRF}.

\label{KRFMonotone} It is interesting to note that using $-\Delta_f \geq \lambda\,\Id$, we can show that $\H(\varphi)$ is monotonically increasing along the K\"ahler-Ricci flow $\frac{\partial\varphi}{\partial t} = f_\varphi - \H(\varphi)$:
\[\frac{d}{dt}\H(\varphi) = -\frac{1}{[\omega_0]^n}\int_X \left(f_\varphi - \H(\varphi)\right)\cdot (\Delta_f + \lambda\,\Id)\left(f_\varphi - \H(\varphi)\right)\,e^{-f_\varphi}\omega_\varphi^n \geq 0.\]
Here we have used the fact that $-\Delta_f - \lambda\,\Id \geq 0$ acting on non-constant functions and that $f_\varphi - \H(\varphi)$ is orthogonal to constant functions (see also \cite[Section 2]{He}).

\section{Commutator of $L_f$ and $\bar{L}_f$}
We will make use of the operators $L_f$ and $\bar{L}_f$ in the proof of the main theorem. It is important to note that in general $\bar{L}_f L_f \not= L_f \bar{L}_f$. In this section, we will compute the product $\bar{L}_f L_f$ and $L_f \bar{L}_f$ acting on scalar functions, and show that if $\nabla f$ is real holomorphic, then $\bar{L}_f$ and $L_f$ indeed commute with each other.
\begin{lemma}
\label{lma:Commutator}
For any $\psi \in C^{\infty}(X, \R)$, we have:
\begin{align}
\label{eq:barL_L}
\bar{L}_f L_f \psi & = \Delta\Delta\psi - 2\langle \nabla f, \nabla\Delta\psi \rangle - \nabla_{\bar j}\nabla_i\psi \cdot \left(\nabla_j \nabla_{\bar i} f - \nabla_j f \cdot \nabla_{\bar i} f\right)\\
\nonumber & \quad - e^f \nabla^{\bar i}\left(e^{-f}\nabla_{\bar i}\nabla_{\bar j}f \cdot \nabla^{\bar j}\psi\right)\\
\label{eq:L_barL}
L_f \bar{L}_f \psi & = \Delta\Delta\psi - 2\langle \nabla f, \nabla\Delta\psi \rangle - \nabla_{\bar j}\nabla_i\psi \cdot \left(\nabla_j \nabla_{\bar i} f - \nabla_j f \cdot \nabla_{\bar i} f\right)\\
\nonumber & \quad - e^f \nabla^i \left(e^{-f}\nabla_i\nabla_j f \cdot \nabla^j \psi\right)
\end{align}
If $\varphi$ is a critical point of $\H$ (so that $\nabla f_\varphi$ is real holomorphic), then we have $\bar{L}_f L_f = L_f \bar{L}_f$.
\end{lemma}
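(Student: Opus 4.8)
The plan is to establish the two displayed identities \eqref{eq:barL_L} and \eqref{eq:L_barL} by a direct tensorial computation, to deduce \eqref{eq:L_barL} from \eqref{eq:barL_L} by complex conjugation, and then to obtain the commutativity statement by subtracting the two. First I would expand $\bar{L}_f L_f\psi$ with the Leibniz rule: writing $L_f\psi = \Delta\psi - \nabla^i f\,\nabla_i\psi$ and applying $\bar{L}_f = \Delta - \nabla^{\bar k}f\,\nabla_{\bar k}$ to it produces four blocks, namely $\Delta\Delta\psi$, the terms from $-\Delta(\nabla^i f\,\nabla_i\psi)$, the term $-\nabla^{\bar k}f\,\nabla_{\bar k}\Delta\psi$, and the terms from $+\nabla^{\bar k}f\,\nabla_{\bar k}(\nabla^i f\,\nabla_i\psi)$. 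Throughout I would use the Christoffel-free identities special to the Kähler setting (mixed covariant derivatives on functions commute, $\Delta = g^{i\bar j}\nabla_i\nabla_{\bar j} = g^{i\bar j}\nabla_{\bar j}\nabla_i$), so that curvature appears only when a barred derivative is pushed past an unbarred one acting on a $(2,0)$- or $(0,2)$-tensor, or vice versa.

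The substantive step is the reorganization of the block $-g^{k\bar l}\nabla_{\bar l}\nabla_k\bigl(g^{i\bar j}\nabla_{\bar j}f\,\nabla_i\psi\bigr)$. Distributing the two derivatives yields a third-derivative-of-$\psi$ term which, after commuting $\nabla_{\bar l}$ past $\nabla_k\nabla_i\psi$ (one factor of $\Rm$) and using the Kähler expression for $\Delta$, collapses to $-\nabla^i f\,\nabla_i\Delta\psi$ plus a curvature term; combined with the block $-\nabla^{\bar k}f\,\nabla_{\bar k}\Delta\psi$ this gives exactly $-2\langle\nabla f,\nabla\Delta\psi\rangle$ plus curvature. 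Every curvature term created this way is a contraction of $R_{i\bar j}$ against $\nabla^2 f$ or $\nabla^2\psi$, and I would remove the Ricci tensor via the defining relation of the Ricci potential, $\nabla_i\nabla_{\bar j}f_\varphi = \lambda g_{i\bar j} - R_{i\bar j}$, which holds for \emph{every} $\varphi\in\K$, not only at critical points; the $\lambda g_{i\bar j}$ pieces cancel among themselves. The residual Hessian-of-$f$ terms — those already present from the Leibniz expansion together with those produced by this substitution — then assemble into the symmetric mixed-Hessian term $-\nabla_{\bar j}\nabla_i\psi\cdot(\nabla_j\nabla_{\bar i}f - \nabla_j f\,\nabla_{\bar i}f)$ and, after collecting every occurrence of the $(0,2)$-Hessian $\nabla_{\bar i}\nabla_{\bar j}f$ (including the block $+\nabla^{\bar k}f\,\nabla_{\bar k}(\nabla^i f\,\nabla_i\psi)$, which is what supplies the $-\nabla^{\bar i}f\cdot(\cdot)$ part of the integrating factor), into the divergence-type term $-e^f\nabla^{\bar i}\bigl(e^{-f}\,\nabla_{\bar i}\nabla_{\bar j}f\cdot\nabla^{\bar j}\psi\bigr)$. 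This proves \eqref{eq:barL_L}.

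For \eqref{eq:L_barL} I would avoid redoing the computation: since $f$ and $\psi$ are real, $\overline{L_f u} = \bar{L}_f\bar u$ gives $\overline{\bar{L}_f L_f\psi} = L_f\bar{L}_f\psi$, and complex conjugation of \eqref{eq:barL_L} fixes each of its first three terms (they are invariant under swapping barred and unbarred indices once $f,\psi$ are real) while turning the last term into $-e^f\nabla^i(e^{-f}\nabla_i\nabla_j f\cdot\nabla^j\psi)$; this is exactly \eqref{eq:L_barL}. Subtracting, $\bar{L}_f L_f\psi - L_f\bar{L}_f\psi = e^f\nabla^i(e^{-f}\nabla_i\nabla_j f\cdot\nabla^j\psi) - e^f\nabla^{\bar i}(e^{-f}\nabla_{\bar i}\nabla_{\bar j}f\cdot\nabla^{\bar j}\psi)$, and when $\nabla f_\varphi$ is real holomorphic — which is the case at critical points of $\H$ by \eqref{eq:PoincareInequality} — both $\nabla_i\nabla_j f = 0$ and $\nabla_{\bar i}\nabla_{\bar j}f = 0$, so the right-hand side vanishes identically and $\bar{L}_f L_f = L_f\bar{L}_f$. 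I expect the only real obstacle to be the sign and index bookkeeping at the two spots where curvature is created, and the verification that after the substitution $R_{i\bar j} = \lambda g_{i\bar j} - \nabla_i\nabla_{\bar j}f$ the leftover terms collapse precisely into the symmetric mixed-Hessian term and the $e^{-f}$-weighted divergence term with nothing extra; the rest is formal.
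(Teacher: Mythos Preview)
Your plan is essentially the paper's: expand $\bar L_f L_f\psi$ via the Leibniz rule in holomorphic normal coordinates, recognize the residual block as the $e^{-f}$-weighted divergence, obtain \eqref{eq:L_barL} by conjugation (since $f,\psi$ are real), and read off commutativity from $\nabla_i\nabla_j f=\nabla_{\bar i}\nabla_{\bar j}f=0$ at a soliton.

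The one point of divergence is your treatment of curvature. You propose to remove the Ricci terms by substituting the Ricci-potential relation $R_{i\bar j}=\lambda g_{i\bar j}-\nabla_i\nabla_{\bar j}f$ and then arguing that the $\lambda g_{i\bar j}$-pieces cancel. The paper instead observes that the two curvature terms produced by the commutations (one from pushing $\nabla_{\bar j}$ past $\nabla_j$ on $\nabla_{\bar i}f$, the other on $\nabla_i\psi$) cancel each other directly by the K\"ahler Bianchi symmetry, so the identity \eqref{eq:barL_L} in fact holds for \emph{any} smooth $f$, with no appeal to the special equation it satisfies. Your route still lands on the same formula, but the substitution is a detour: since the curvature contributions already sum to zero, the extra $\nabla_i\nabla_{\bar j}f$-pieces you introduce will cancel among themselves rather than feed into the mixed-Hessian or divergence terms as you anticipate.
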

\begin{proof}
It suffices to show \eqref{eq:barL_L} only then \eqref{eq:L_barL} follows from conjugation.
\begin{align}
\label{eq:barL_L_1}
\bar{L}_f L_f \psi & = \bar{L}_f \left(\Delta \psi - \nabla^i f \cdot \nabla_i \psi\right)\\
\nonumber & = \underbrace{\Delta\Delta\psi - \nabla^{\bar j} f \cdot \nabla_{\bar j}\Delta\psi}_{\bar{L}_f\left(\Delta\psi\right)} - \bar{L}_f\left(\nabla^i f \cdot \nabla_i \psi\right)
\end{align}

For convenience, we use holormophic normal coordinates (with respect to $\omega_\varphi$) in the rest of computations.
\begin{align*}
& \bar{L}_f \left(\nabla^i f \cdot \nabla_i\psi\right)\\
& = \frac{1}{2} \nabla_j \nabla_{\bar j}\left(\nabla_{\bar i}f \cdot \nabla_i\psi\right) + \frac{1}{2} \nabla_{\bar j} \nabla_j\left(\nabla_{\bar i}f \cdot \nabla_i\psi\right)- \nabla_j f \cdot \nabla_{\bar j} \left(\nabla_{\bar i}f \cdot \nabla_i\psi\right)\\
& = \frac{1}{2}\nabla_j \left(\nabla_{\bar j}\nabla_{\bar i}f \cdot \nabla_i\psi + \nabla_{\bar i} f \cdot \nabla_{\bar j}\nabla_i \psi\right) + \frac{1}{2} \nabla_{\bar j} \left(\nabla_j \nabla_{\bar i} f \cdot \nabla_i \psi + \nabla_{\bar i} f \cdot \nabla_j\nabla_i\psi\right)\\
& \qquad - \nabla_j f \cdot \nabla_{\bar j}\nabla_{\bar i}f \cdot \nabla_i \psi - \nabla_j f \cdot \nabla_{\bar i} f \cdot\nabla_{\bar j}\nabla_i \psi\\
& = \frac{1}{2}\nabla_j \nabla_{\bar j}\nabla_{\bar i}f \cdot \nabla_i\psi + \frac{1}{2}\nabla_{\bar j}\nabla_{\bar i}f \cdot \nabla_j\nabla_i\psi + \frac{1}{2}\nabla_j \nabla_{\bar i} f \cdot \nabla_{\bar j}\nabla_i\psi + \frac{1}{2}\nabla_{\bar i} f \cdot \nabla_i \Delta\psi\\
& \qquad + \frac{1}{2}\nabla_{\bar j}\nabla_{\bar i}\nabla_j f \cdot \nabla_i \psi + \frac{1}{2}\nabla_j\nabla_{\bar i} f \cdot \nabla_{\bar j}\nabla_i \psi + \frac{1}{2}\nabla_{\bar j}\nabla_{\bar i} f \cdot \nabla_j\nabla_i\psi +\frac{1}{2}\nabla_{\bar i} f \cdot \nabla_{\bar j}\nabla_j\nabla_i \psi\\
& \qquad - \nabla_j f \cdot \nabla_{\bar j}\nabla_{\bar i}f \cdot \nabla_i \psi - \nabla_j f \cdot \nabla_{\bar i} f \cdot\nabla_{\bar j}\nabla_i \psi
\end{align*}
Grouping the 5th and 8th terms together, we get:
\begin{align*}
& \frac{1}{2}\nabla_{\bar j}\nabla_{\bar i}\nabla_j f \cdot \nabla_i\psi + \frac{1}{2}\nabla_{\bar i}f \cdot \nabla_{\bar j}\nabla_j\nabla_i\psi\\
& = \frac{1}{2}\nabla_{\bar j}\nabla_j\nabla_{\bar i} f \cdot \nabla_i\psi + \frac{1}{2}\nabla_{\bar i}f \cdot \nabla_{\bar j}\nabla_i\nabla_j\psi\\
& = \frac{1}{2}\left(\nabla_j\nabla_{\bar j}\nabla_{\bar i}f-R_{\bar{j} j \bar{i}k}\cdot\nabla_{\bar{k}}f\right) \cdot \nabla_i\psi + \frac{1}{2}\nabla_{\bar i}f \cdot \left(\nabla_i\nabla_{\bar j}\nabla_j\psi - R_{\bar{j}ij\bar{k}}\cdot \nabla_k\psi\right)\\
& = \frac{1}{2}\nabla_j\nabla_{\bar j}\nabla_{\bar{i}}f \cdot \nabla_i \psi + \frac{1}{2}\nabla_{\bar i}f \cdot \nabla_i\Delta\psi.
\end{align*}
The Riemann curvature terms cancel each other by the Bianchi identity.

Substituting it back in, we get:
\begin{align*}
& \bar{L}_f \left(\nabla^i f \cdot \nabla_i\psi\right)\\
& = \nabla_{\bar j}\nabla_i\psi \cdot \left(\nabla_j \nabla_{\bar i} f - \nabla_j f \cdot \nabla_{\bar i} f\right) + \nabla_{\bar i}f \cdot \nabla_i\Delta\psi\\
& \qquad + \nabla_j\nabla_{\bar j}\nabla_{\bar i}f \cdot \nabla_i\psi + \nabla_{\bar j}\nabla_{\bar i}f \cdot \nabla_j\nabla_i\psi - \nabla_j f\cdot \nabla_{\bar j}\nabla_{\bar i}f \cdot \nabla_i\psi.
\end{align*}
It is straight-forward to verify that the last three terms sum up to $e^f\nabla_j\left(e^{-f}\nabla_{\bar j}\nabla_{\bar i}f \cdot \nabla_i\psi\right)$. Combining with \eqref{eq:barL_L_1}, we proved \eqref{eq:barL_L}. 
\end{proof}

Furthermore, we define the weighted divergence $\Div_f$ by:
\begin{align*}
\Div_f X & = \frac{1}{2}e^f \left[\nabla_i(e^{-f}X^i) + \nabla_{\bar i}(e^{-f}X^{\bar i})\right] & \text{for any vector field $X$}\\
\Div_f \alpha & = \frac{1}{2} e^f \left[\nabla^i (e^{-f}\alpha_i) + \nabla^{\bar i}(e^{-f}\alpha_{\bar i}) \right] & \text{for any 1-form $\alpha$}\\
\left[\Div_f \eta\right]_B & = \frac{1}{2}e^f \sum_A \nabla^A (e^{-f}\eta_{AB}) & \text{for any symmetric 2-tensor $\eta$}
\end{align*}
Clearly we have $\Div_f\nabla\psi = \Delta_f\psi$ for any $\psi \in C^{\infty}(X)$. Using the weighted divergence, one can also express $\bar{L}_f L_f$ as the following:
\begin{lemma}
\label{lma:DivDiv}
Given any $\psi \in C^{\infty}(X, \R)$, we denote $\nabla\bar{\nabla}\psi = \psi_{i\bar j} \left(dz^i \otimes d\bar{z}^j + d\bar{z}^j \otimes dz^i\right)$ which is a symmetric real 2-tensor. Then we have:
\begin{equation}
\label{eq:DivDiv}
2\,\Div_f \Div_f \nabla\bar\nabla\psi = \Delta\Delta\psi - 2\langle \nabla f, \nabla\Delta\psi \rangle + \nabla_j\nabla_{\bar i}\psi \left(\nabla^j f \cdot \nabla^{\bar i} f - \nabla^j\nabla^{\bar i} f\right)
\end{equation}
and hence from \eqref{eq:barL_L}, we have:
\begin{equation}
\label{eq:LLDivDiv}
\bar{L}_fL_f\psi = 2\,\Div_f\Div_f\nabla\bar\nabla\psi - e^f \nabla^{\bar i}\left(e^{-f}\nabla_{\bar i}\nabla_{\bar j}f \cdot \nabla^{\bar j}\psi\right).
\end{equation}
\end{lemma}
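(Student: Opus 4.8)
\section*{Proof proposal}

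The plan is to establish \eqref{eq:DivDiv} by a direct computation, expanding $\Div_f\Div_f$ of the tensor $\nabla\bar\nabla\psi$ in holomorphic normal coordinates (with respect to $\omega_\varphi$) and then collecting terms; once \eqref{eq:DivDiv} is in hand, the identity \eqref{eq:LLDivDiv} is immediate, since the right-hand sides of \eqref{eq:DivDiv} and \eqref{eq:barL_L} differ precisely by the term $-e^f\nabla^{\bar i}(e^{-f}\nabla_{\bar i}\nabla_{\bar j}f\cdot\nabla^{\bar j}\psi)$.

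First I would record the components of $\eta := \nabla\bar\nabla\psi$: the only non-vanishing ones are $\eta_{i\bar j} = \eta_{\bar j i} = \psi_{i\bar j} = \nabla_i\nabla_{\bar j}\psi$, while the $(2,0)$- and $(0,2)$-parts vanish. Feeding this into the definition of $\Div_f$ on a symmetric $2$-tensor produces a $1$-form $\alpha := \Div_f\eta$ with $\alpha_{\bar j} = \tfrac{1}{2}e^f\nabla^i(e^{-f}\psi_{i\bar j})$ and, by reality of $\eta$, $\alpha_j = \overline{\alpha_{\bar j}} = \tfrac{1}{2}e^f\nabla^{\bar i}(e^{-f}\psi_{j\bar i})$. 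The key simplification here is the Kähler identity $\nabla^i\psi_{i\bar j} = \nabla^i\psi_{\bar j i} = \nabla_{\bar j}\Delta\psi$, obtained by using the symmetry of the complex Hessian of a function together with the vanishing of the commutator of two anti-holomorphic covariant derivatives and the parallelism of the metric; it lets one write $\alpha_{\bar j} = \tfrac{1}{2}\left(\nabla_{\bar j}\Delta\psi - \nabla^i f\cdot\psi_{i\bar j}\right)$ with no curvature appearing.

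Next I would apply $\Div_f$ to the $1$-form $\alpha$ via $\Div_f\alpha = \tfrac{1}{2}e^f\left[\nabla^i(e^{-f}\alpha_i) + \nabla^{\bar i}(e^{-f}\alpha_{\bar i})\right]$, expand with the Leibniz rule, and again invoke $\nabla^i\psi_{i\bar k} = \nabla_{\bar k}\Delta\psi$ and its conjugate. The two summands are complex conjugates of each other; upon adding them, the $\Delta\psi$-gradient pieces assemble into $-2\langle\nabla f,\nabla\Delta\psi\rangle$ and the remaining pieces assemble into $\nabla_j\nabla_{\bar i}\psi\left(\nabla^j f\cdot\nabla^{\bar i}f - \nabla^j\nabla^{\bar i}f\right)$, both of which are manifestly real, giving \eqref{eq:DivDiv}. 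The step I expect to require the most care is precisely this bookkeeping: in contrast with Lemma~\ref{lma:Commutator}, where curvature terms appeared and had to be cancelled by the Bianchi identity, here one must arrange the commutations so that every holomorphic derivative is moved only past holomorphic derivatives (and likewise for anti-holomorphic ones), in which case no Riemann or Ricci curvature terms ever enter; mishandling the order of differentiation instead produces spurious Ricci terms that one would then have to show cancel.

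Finally, with \eqref{eq:DivDiv} established, I would read off \eqref{eq:LLDivDiv} by subtracting \eqref{eq:DivDiv} from \eqref{eq:barL_L}, after noting that the term $-\nabla_{\bar j}\nabla_i\psi\left(\nabla_j\nabla_{\bar i}f - \nabla_j f\cdot\nabla_{\bar i}f\right)$ in \eqref{eq:barL_L} coincides with $\nabla_j\nabla_{\bar i}\psi\left(\nabla^j f\cdot\nabla^{\bar i}f - \nabla^j\nabla^{\bar i}f\right)$ after raising indices; this leaves exactly $\bar L_f L_f\psi = 2\,\Div_f\Div_f\nabla\bar\nabla\psi - e^f\nabla^{\bar i}\left(e^{-f}\nabla_{\bar i}\nabla_{\bar j}f\cdot\nabla^{\bar j}\psi\right)$.
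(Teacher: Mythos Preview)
Your proposal is correct and follows essentially the same route as the paper: compute the $1$-form $\alpha = \Div_f(\nabla\bar\nabla\psi)$ componentwise using the K\"ahler identity $\nabla^{\bar i}\nabla_{\bar i}\nabla_j\psi = \nabla_j\Delta\psi$ (the paper writes $\alpha_j$ first, you write $\alpha_{\bar j}$ first, but these are conjugates), then apply $\Div_f$ again, expand, and take the real part to obtain \eqref{eq:DivDiv}; the deduction of \eqref{eq:LLDivDiv} by comparison with \eqref{eq:barL_L} is identical. Your remark that the commutations should be arranged so that only same-type derivatives are swapped, thereby avoiding any curvature terms, is exactly the point that makes the computation clean.
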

\begin{proof}
It can be proved by straight-forward computations. We first compute:
\begin{align*}
\left[\Div_f\left(\nabla\bar\nabla\psi\right)\right]_j & = \frac{1}{2} e^f \nabla^{\bar i} \left(e^{-f}\nabla_{\bar i}\nabla_j \psi\right)\\
& = \frac{1}{2}\left(-\nabla^{\bar i} f \cdot \nabla_{\bar i}\nabla_j\psi + \nabla^{\bar i}\nabla_{\bar i}\nabla_j\psi\right)\\
& = \frac{1}{2}\left(-\nabla^{\bar i} f \cdot \nabla_{\bar i}\nabla_j\psi + \nabla_j\Delta\psi\right)
\end{align*}
and so we can derive:
\begin{align*}
\Div_f \Div_f\left(\nabla\bar\nabla\psi\right) & = \textup{Re} \left\{e^f \nabla^j \left(e^{-f} \left[\Div_f\left(\nabla\bar\nabla\psi\right)\right]_j\right)\right\}\\
& = \textup{Re}\left\{-\nabla^j f \cdot  \left[\Div_f\left(\nabla\bar\nabla\psi\right)\right]_j + \nabla^j \left[\Div_f\left(\nabla\bar\nabla\psi\right)\right]_j\right\}\\
& = \frac{1}{2}\textup{Re}\left\{\nabla_{\bar i}\nabla_j\psi \cdot \nabla^{\bar i} f \cdot \nabla^j f - \nabla^j f \cdot \nabla_j\Delta\psi - \nabla^{\bar i} f \cdot \nabla_{\bar i}\Delta\psi\right.\\
& \qquad\qquad \left.- \nabla^j \nabla^{\bar i} f \cdot \nabla_{\bar i}\nabla_j\psi + \Delta\Delta\psi\right\}\\
& = \frac{1}{2}\left(\nabla_j\nabla_{\bar i}\psi \left(\nabla^j f \cdot \nabla^{\bar i} f - \nabla^j\nabla^{\bar i} f\right) - 2\langle \nabla\Delta\psi, \nabla f\rangle + \Delta\Delta\psi\right)
\end{align*}
as desired for \eqref{eq:DivDiv}. Then, \eqref{eq:LLDivDiv} follows from \eqref{eq:barL_L}.
\end{proof}

\begin{remark}
It is also helpful to note that $2\,\Div_f\Div_f = \left(\nabla\bar\nabla\right)^*$, which is the adjoint of
\begin{align*}
\nabla\bar\nabla : C^\infty(X, \R) & \to \text{Sym}^2(X)\\
\psi & \mapsto \psi_{i\bar j} \left(dz^i \otimes d\bar{z}^j + d\bar{z}^j \otimes dz^i\right)
\end{align*}
with respect to the measure $e^{-f_\varphi}\omega_\varphi^n$.
\end{remark}

\section{Second Variation of $\H$}
Next we derive the second variation formula of $\H$, and show that every K\"ahler-Ricci soliton is linearly stable with respect to $\H$. We first show:
\begin{proposition}[Evolution Equation of $\DH$]
\label{prop:FirstVarDH}
The variation of $\DH$ along a 1-parameter family $\varphi(t)$ of K\"ahler potential in $\K$ such that $\frac{\partial\varphi}{\partial t} = \psi$ is given by:
\begin{align}
\label{eq:FirstVarDH}
\frac{\partial}{\partial t}\DH & = - 2\,\Div_f\Div_f\left(\nabla\bar\nabla\psi\right) - 2\lambda\,\Delta_f\psi - \lambda^2 \left(\psi - \frac{1}{[\omega_0]^n}\int_X \psi e^{-f}\omega^n\right)\\
\nonumber & \qquad + \frac{\lambda}{[\omega_0]^n}\int_X \psi \, \DH\,e^{-f}\omega^n
\end{align}

\end{proposition}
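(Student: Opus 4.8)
The plan is to differentiate $\DH$ in its explicit form and collect terms. Write $f := f_\varphi$ and $\omega := \omega_\varphi$; recalling that $\DH(\varphi) = -\bigl[\Delta f - \abs{\nabla f}^2 + \lambda(f-\H(\varphi))\bigr]$, it suffices to compute $\frac{\partial}{\partial t}$ of each summand in
\[\DH(\varphi) = -\Delta f + \abs{\nabla f}^2 - \lambda f + \lambda\H(\varphi)\]
along $\frac{\partial\varphi}{\partial t} = \psi$. The only ingredients are: Lemma \ref{lma:VarF}, which gives $\frac{\partial f}{\partial t} = \Delta\psi + \lambda(\psi - m)$ with $m := \frac{1}{[\omega_0]^n}\int_X \psi\,e^{-f}\omega^n$ a function of $t$ only; the variation of the metric $\frac{\partial}{\partial t}g_{i\bar j} = \nabla_i\nabla_{\bar j}\psi$, hence $\frac{\partial}{\partial t}g^{i\bar j} = -\nabla^i\nabla^{\bar j}\psi$; and the first variation $\frac{\partial}{\partial t}\H(\varphi) = \frac{1}{[\omega_0]^n}\int_X \psi\,\DH\,e^{-f}\omega^n$ from Proposition \ref{prop:FirstVarH} (equivalently, the definition of the $L^2$-gradient of $\H$).

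Then I would carry out the four computations. (i) $\frac{\partial}{\partial t}(\lambda\H)$ is, by Proposition \ref{prop:FirstVarH}, exactly the last term of \eqref{eq:FirstVarDH}. (ii) Since $m$ is spatially constant, $\frac{\partial}{\partial t}(-\lambda f) = -\lambda\bigl[\Delta\psi + \lambda(\psi-m)\bigr] = -\lambda\Delta\psi - \lambda^2(\psi - m)$, producing the term $-\lambda^2(\psi-m)$ together with a spare $-\lambda\Delta\psi$. (iii) Because $\Delta f = g^{i\bar j}\partial_i\partial_{\bar j}f$ is built from the inverse metric and ordinary derivatives of the scalar $f$, and $\Delta\frac{\partial f}{\partial t} = \Delta\Delta\psi + \lambda\Delta\psi$ (the constant $m$ is killed by $\Delta$), differentiating gives $\frac{\partial}{\partial t}(-\Delta f) = \nabla^i\nabla^{\bar j}\psi\,\nabla_i\nabla_{\bar j}f - \Delta\Delta\psi - \lambda\Delta\psi$, with no curvature terms. (iv) Similarly $\abs{\nabla f}^2 = g^{i\bar j}\partial_i f\,\partial_{\bar j}f$, so $\frac{\partial}{\partial t}\abs{\nabla f}^2 = -\nabla^i\nabla^{\bar j}\psi\,\nabla_i f\,\nabla_{\bar j}f + 2\,\textup{Re}\langle\nabla\Delta\psi + \lambda\nabla\psi,\,\nabla f\rangle$.

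Now I would reassemble. Adding (i)--(iv), the fourth-order and mixed-Hessian part of the sum is
\[-\Delta\Delta\psi + 2\,\textup{Re}\langle\nabla f,\nabla\Delta\psi\rangle + \nabla^i\nabla^{\bar j}\psi\,\nabla_i\nabla_{\bar j}f - \nabla^i\nabla^{\bar j}\psi\,\nabla_i f\,\nabla_{\bar j}f,\]
which is precisely $-2\,\Div_f\Div_f(\nabla\bar\nabla\psi)$ by \eqref{eq:DivDiv} of Lemma \ref{lma:DivDiv}. The remaining lower-order terms combine as $-2\lambda\Delta\psi + 2\lambda\,\textup{Re}\langle\nabla\psi,\nabla f\rangle = -2\lambda\,\Delta_f\psi$ by the definition of $\Delta_f$. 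Together with $-\lambda^2(\psi-m)$ from (ii) and the integral term from (i), this is exactly \eqref{eq:FirstVarDH}. Note that the soliton equation is never invoked, so the identity holds for arbitrary $\varphi\in\K$; and since $\Delta f$ and $\abs{\nabla f}^2$ depend on $f$ only as a scalar, no Riemann curvature enters at any point (in contrast to Lemma \ref{lma:Commutator}, where curvature appears and must be cancelled via Bianchi).

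The computation is essentially bookkeeping; the one substantive step is recognizing the displayed fourth-order combination as $-2\,\Div_f\Div_f(\nabla\bar\nabla\psi)$, which is the content of Lemma \ref{lma:DivDiv}. The points requiring care are the index conventions in the mixed-Hessian contraction $\nabla^i\nabla^{\bar j}\psi\,\nabla_i\nabla_{\bar j}f$ (using that $\nabla_i\nabla_{\bar j}$ is symmetric on scalars, so that it matches the term $\nabla_j\nabla_{\bar i}\psi\,\nabla^j\nabla^{\bar i}f$ of \eqref{eq:DivDiv}, and likewise for $\nabla^i\nabla^{\bar j}\psi\,\nabla_i f\,\nabla_{\bar j}f$), and the real-part symmetrizations arising from step (iv), e.g.\ that $\langle\nabla\Delta\psi,\nabla f\rangle + \langle\nabla f,\nabla\Delta\psi\rangle$ is the real quantity $2\langle\nabla f,\nabla\Delta\psi\rangle$ occurring in \eqref{eq:DivDiv} and that $\Delta_f\psi = \Delta\psi - \textup{Re}\langle\nabla\psi,\nabla f\rangle$ for real $\psi,f$.
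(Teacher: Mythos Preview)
Your proposal is correct and follows essentially the same approach as the paper: differentiate the explicit expression $\DH=-\Delta f+|\nabla f|^2-\lambda f+\lambda\H$ using Lemma~\ref{lma:VarF} for $\partial_t f$, the metric variation $\partial_t g^{i\bar j}=-\nabla^i\nabla^{\bar j}\psi$, and Proposition~\ref{prop:FirstVarH} for $\partial_t\H$, then recognize the fourth-order combination via Lemma~\ref{lma:DivDiv}. The only difference is organizational---you break the computation into four labeled pieces whereas the paper proceeds in a single chain---but the ingredients, the key identification step, and the final assembly are identical.
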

\begin{proof}
First note that $\DH = -\left(\Delta_f + \lambda\,\Id\right)\left(f - \H\right)$. We compute:
\begin{align*}
& \frac{\partial}{\partial t}\left(\Delta_f + \lambda\,\Id\right)\left(f - \H\right)\\
& = \frac{\partial}{\partial t}\left(\Delta f - \abs{\nabla f}^2 + \lambda \left(f - \H\right)\right)\\
& = -g^{i\bar q}g^{p\bar j} \cdot \nabla_p \nabla_{\bar q}\left(\frac{\partial\varphi}{\partial t}\right) \cdot \nabla_i \nabla_{\bar j} f + \Delta \left(\frac{\partial f}{\partial t}\right) + g^{i\bar q}g^{p\bar j} \cdot \nabla_p\nabla_{\bar q}\left(\frac{\partial\varphi}{\partial t}\right) \cdot \nabla_i f \cdot \nabla_{\bar j} f\\
& \qquad - 2\left\langle\nabla \left(\frac{\partial f}{\partial t}\right), \nabla f\right\rangle + \lambda\left(\frac{\partial f}{\partial t} - \frac{\partial\H}{\partial t}\right)\\
& = \nabla_i\nabla_{\bar j}\psi \left(\nabla^i f \cdot \nabla^{\bar j} f - \nabla^i \nabla^{\bar j} f\right) + \Delta\left(\frac{\partial f}{\partial t}\right) - 2\left\langle\nabla \left(\frac{\partial f}{\partial t}\right), \nabla f\right\rangle + \lambda\left(\frac{\partial f}{\partial t} - \frac{\partial\H}{\partial t}\right)
\end{align*}
From Lemma \ref{lma:VarF}, we have $\displaystyle{
\frac{\partial f}{\partial t} = \Delta\psi + \lambda\left(\psi - \frac{1}{[\omega_0]^n}\int_X \psi e^{-f}\omega^n\right)}$. Therefore:
\begin{align*}
& \frac{\partial}{\partial t}\left(\Delta_f + \lambda\,\Id\right)\left(f - \H\right)\\
& =	\nabla_i\nabla_{\bar j}\psi \left(\nabla^i f \cdot \nabla^{\bar j} f - \nabla^i \nabla^{\bar j} f\right) + \Delta\left(\Delta\psi + \lambda\left(\psi - \frac{1}{[\omega_0]^n}\int_X \psi e^{-f}\omega^n\right)\right)\\
& \qquad - 2\left\langle\nabla \left(\Delta\psi + \lambda\left(\psi - \frac{1}{[\omega_0]^n}\int_X \psi e^{-f}\omega^n\right)\right), \nabla f\right\rangle\\
& \qquad + \lambda\left(\Delta\psi + \lambda\left(\psi - \frac{1}{[\omega_0]^n}\int_X \psi e^{-f}\omega^n\right) - \frac{\partial\H}{\partial t}\right)\\
& = \underbrace{\Delta\Delta\psi - 2\langle\nabla f, \nabla\Delta\psi\rangle + \nabla_i\nabla_{\bar j}\psi \left(\nabla^i f \cdot \nabla^{\bar j} f - \nabla^i \nabla^{\bar j} f\right)}_{= 2\,\Div_f\Div_f\left(\nabla\bar\nabla\psi\right) \; \text{from Lemma \ref{lma:DivDiv}}}\\
& \qquad + 2\lambda\Delta\psi - 2\lambda\langle \nabla f, \nabla \psi\rangle + \lambda^2 \left(\psi - \frac{1}{[\omega_0]^n}\int_X \psi e^{-f}\omega^n\right) - \lambda\frac{\partial\H}{\partial t}\\
& = 2\,\Div_f\Div_f\left(\nabla\bar\nabla\psi\right) + 2\lambda\,\Delta_f\psi + \lambda^2 \left(\psi - \frac{1}{[\omega_0]^n}\int_X \psi e^{-f}\omega^n\right) - \lambda\frac{\partial\H}{\partial t}\\
& = 2\,\Div_f\Div_f\left(\nabla\bar\nabla\psi\right) + 2\lambda\,\Delta_f\psi + \lambda^2 \left(\psi - \frac{1}{[\omega_0]^n}\int_X \psi e^{-f}\omega^n\right) - \underbrace{\frac{\lambda}{[\omega_0]^n}\int_X \psi \, \DH\,e^{-f}\omega^n}_{\text{from Proposition \ref{prop:FirstVarH}}}
\end{align*}
It completes the proof of \eqref{eq:FirstVarDH}.
\end{proof}

\begin{proposition}[Second Variation of $\H$]
\label{prop:SecondVarH}
Let $\varphi(s,t)$ be a 2-parameter family of potentials in $\K$. Denote $\chi := \frac{\partial\varphi}{\partial s}$ and $\psi := \frac{\partial\varphi}{\partial t}$, then the second variation of $\H(\varphi)$ is given by:
\begin{align}
\label{eq:SecondVarH} & \quad \frac{\partial^2}{\partial s \partial t}\H(\varphi(s,t))\\
\nonumber & = \frac{1}{[\omega_0]^n}\int_X \frac{\partial^2\varphi}{\partial s \partial t}\,\DH(\varphi)\,e^{-f_\varphi}\omega_\varphi^n - \frac{\lambda}{[\omega_0]^n}\int_X \left(\chi - \underline{\chi}\right)\left(\psi - \underline{\psi}\right)\,\DH(\varphi)\,e^{-f_\varphi}\omega_\varphi^n\\
\nonumber & \quad -\frac{1}{[\omega_0]^n}\int_X \psi\left[2\,\Div_f\Div_f\nabla\bar\nabla\chi + 2\lambda\,\Delta_f\chi + \lambda^2\left(\chi - \underline{\chi}\right)\right] e^{-f_\varphi}\omega_\varphi^n
\end{align}
where $\displaystyle{\underline{\chi} := \frac{1}{[\omega_0]^n}\int_X \chi e^{-f_\varphi}\omega_\varphi^n}$ and $\displaystyle{\underline{\psi} := \frac{1}{[\omega_0]^n}\int_X \psi e^{-f_\varphi}\omega_\varphi^n}$ are the averages of $\chi$ and $\psi$ over $X$ with respect to the measure $e^{-f_\varphi}\omega_\varphi^n$.
\end{proposition}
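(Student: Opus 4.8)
The idea is to differentiate the first variation formula of Proposition~\ref{prop:FirstVarH} one more time. Writing $\psi = \partial\varphi/\partial t$ and $\chi = \partial\varphi/\partial s$, that formula reads
\[\frac{\partial}{\partial t}\H(\varphi(s,t)) = \frac{1}{[\omega_0]^n}\int_X \psi\,\DH(\varphi)\,e^{-f_\varphi}\omega_\varphi^n,\]
and I would apply $\partial/\partial s$ to both sides. By the product rule this yields three pieces, according to whether $\partial/\partial s$ hits $\psi$, $\DH$, or the weighted volume form $e^{-f_\varphi}\omega_\varphi^n$. The first piece is at once $\frac{1}{[\omega_0]^n}\int_X \frac{\partial^2\varphi}{\partial s\,\partial t}\,\DH\,e^{-f_\varphi}\omega_\varphi^n$, using $\partial_s\psi = \partial_s\partial_t\varphi$; this is the first term on the right of \eqref{eq:SecondVarH}.

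For the volume-form piece I would use $\partial_s\log\omega_\varphi^n = \Delta\chi$ to get $\partial_s\!\left(e^{-f}\omega^n\right) = \left(-\partial_s f + \Delta\chi\right)e^{-f}\omega^n$, and then substitute $\partial_s f = \Delta\chi + \lambda(\chi - \underline\chi)$ from Lemma~\ref{lma:VarF}; the bracket collapses to $-\lambda(\chi - \underline\chi)$, so this piece equals $-\frac{\lambda}{[\omega_0]^n}\int_X \psi\,(\chi - \underline\chi)\,\DH\,e^{-f}\omega^n$. For the $\partial_s\DH$ piece I would plug in the evolution equation of Proposition~\ref{prop:FirstVarDH} verbatim (with $\psi$ there replaced by $\chi$): its first three summands, paired against $\psi$, produce exactly $-\frac{1}{[\omega_0]^n}\int_X \psi\bigl[2\,\Div_f\Div_f\nabla\bar\nabla\chi + 2\lambda\,\Delta_f\chi + \lambda^2(\chi - \underline\chi)\bigr]e^{-f}\omega^n$, the last term of \eqref{eq:SecondVarH}, while the fourth summand of Proposition~\ref{prop:FirstVarDH} is a constant and pulls out of the integral as $\underline\psi\cdot\frac{\lambda}{[\omega_0]^n}\int_X \chi\,\DH\,e^{-f}\omega^n$.

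It then remains to reconcile the two leftover ``average'' terms, namely $-\frac{\lambda}{[\omega_0]^n}\int_X \psi(\chi - \underline\chi)\DH\,e^{-f}\omega^n$ from the volume-form piece and $+\underline\psi\cdot\frac{\lambda}{[\omega_0]^n}\int_X \chi\,\DH\,e^{-f}\omega^n$ from the $\partial_s\DH$ piece, with the single cross term $-\frac{\lambda}{[\omega_0]^n}\int_X (\psi - \underline\psi)(\chi - \underline\chi)\DH\,e^{-f}\omega^n$ in \eqref{eq:SecondVarH}. Splitting $\psi = (\psi - \underline\psi) + \underline\psi$ in the first of the leftover terms extracts precisely that cross term and leaves $-\frac{\lambda\underline\psi}{[\omega_0]^n}\int_X (\chi - \underline\chi)\DH\,e^{-f}\omega^n$. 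Here I would invoke the identity $\int_X \DH\,e^{-f_\varphi}\omega_\varphi^n = 0$: since $\DH = -(\Delta_f + \lambda\,\Id)(f_\varphi - \H)$, the $\Delta_f$ part integrates to zero against $e^{-f}\omega^n$ because $\Delta_f$ is self-adjoint and annihilates constants, while $\int_X (f_\varphi - \H)e^{-f}\omega^n = 0$ follows directly from the definition \eqref{eq:H} of $\H$ and the normalization $\int_X e^{-f_\varphi}\omega_\varphi^n = [\omega_0]^n$. Hence $\int_X (\chi - \underline\chi)\DH\,e^{-f}\omega^n = \int_X \chi\,\DH\,e^{-f}\omega^n$, the two $\underline\psi$-weighted terms cancel exactly, and collecting the survivors gives \eqref{eq:SecondVarH}.

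The computation is routine once organized this way; the only subtleties are the sign bookkeeping in $\partial_s f$ via Lemma~\ref{lma:VarF}, the interchange $\partial_s\partial_t\varphi = \partial_t\partial_s\varphi$, and the cancellation of the constant terms via $\int_X \DH\,e^{-f}\omega^n = 0$, which is the single mildly non-obvious input. As a consistency check, the resulting bilinear form should be symmetric under $\chi \leftrightarrow \psi$: the operator $\psi \mapsto 2\,\Div_f\Div_f\nabla\bar\nabla\psi$ is self-adjoint with respect to $(\cdot,\cdot)_f$ (it has the form $(\nabla\bar\nabla)^*(\nabla\bar\nabla)$), $\Delta_f$ is self-adjoint, and the $\lambda^2$ and $\DH$ terms are visibly symmetric once the averages are subtracted, so no $s$--$t$ asymmetry can survive.
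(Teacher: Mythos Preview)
Your proof is correct and follows essentially the same route as the paper: differentiate the first variation formula by $s$, split into the three product-rule pieces, feed in Lemma~\ref{lma:VarF} and Proposition~\ref{prop:FirstVarDH}, and then use $\int_X \DH\,e^{-f_\varphi}\omega_\varphi^n = 0$ to reconcile the leftover average terms into the single cross term. Your justification of that last integral identity and the symmetry check are nice additions beyond what the paper writes out.
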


\begin{proof}
To begin, we recall that:
\[\frac{\partial}{\partial t}\H(\varphi) = \frac{1}{[\omega_0]^n}\int_X \psi\,\DH(\varphi)\,e^{-f_\varphi}\omega_\varphi^n.\]
Next we differentiate both sides by $s$:
\begin{align*}
\frac{\partial^2}{\partial s \partial t}\H(\varphi(s,t)) & = \frac{1}{[\omega_0]^n}\int_X \left(\frac{\partial\psi}{\partial s}\,\DH(\varphi) + \psi\,\frac{\partial}{\partial s}\DH(\varphi)\right)e^{-f_\varphi}\omega_\varphi^n\\
& \quad - \frac{1}{[\omega_0]^n}\int_X \psi\,\DH(\varphi)\left(\frac{\partial f_\varphi}{\partial t} - \Delta\chi\right)e^{-f_\varphi}\omega_\varphi^n
\end{align*}
Recall from \eqref{eq:f_varphi} that:
\[\frac{\partial f_\varphi}{\partial s} = \Delta \chi + \lambda\left(\chi - \underline{\chi}\right).\]
From Proposition \ref{prop:FirstVarDH}, we also have:
\[\frac{\partial}{\partial s}\DH(\varphi) = - 2\,\Div_f\Div_f\left(\nabla\bar\nabla\chi\right) - 2\lambda\,\Delta_f\chi - \lambda^2 \left(\chi - \underline{\chi}\right) + \frac{\lambda}{[\omega_0]^n}\int_X \chi \, \DH(\varphi)\,e^{-f}\omega^n.\]
Substituting these two results back in, we get:
\begin{align*}
\frac{\partial^2}{\partial s \partial t}\H(\varphi(s,t)) & = \frac{1}{[\omega_0]^n}\int_X \frac{\partial^2\varphi}{\partial s \partial t}\,\DH(\varphi)\,e^{-f_\varphi}\omega_\varphi^n\\
& \quad - \frac{1}{[\omega_0]^n}\int_X \psi \left[2\,\Div_f\Div_f\left(\nabla\bar\nabla\chi\right) + 2\lambda\,\Delta_f\chi + \lambda^2 \left(\chi - \underline{\chi}\right)\right]\,e^{-f_\varphi}\omega_\varphi^n\\
& \quad + \frac{\lambda}{[\omega_0]^n} \int_X \underline{\psi} \, \chi \, \DH(\varphi)\,e^{-f}\omega^n - \frac{\lambda}{[\omega_0]^n}\int_X \psi\left(\chi - \underline{\chi}\right)\,\DH(\varphi)\,e^{-f_\varphi}\omega_\varphi^n.
\end{align*}
Finally, using the fact that:
\[\left(\psi - \underline{\psi}\right)\left(\chi - \underline{\chi}\right)\,\DH(\varphi) = \psi\left(\chi-\underline{\chi}\right)\,\DH(\varphi) - \underline{\psi}\,\chi\,\DH(\varphi) + \underline{\psi}\,\underline{\chi}\,\DH(\varphi)\]
and $\displaystyle{\int_X \DH(\varphi)\,e^{-f_\varphi}\omega_\varphi^n = 0}$, we have completed the proof of the proposition.

\end{proof}

\begin{corollary}
In particular, if $\DH(\varphi(0,0)) = 0$ (i.e. $\omega_0$ is a K\"ahler-Ricci soliton), then we have:
\begin{align}
\label{eq:SecondVarHKRS}
& \left.\frac{\partial^2}{\partial s \partial t}\right|_{(s,t) = (0,0)} \H(\varphi(s,t))\\
\nonumber & = -\frac{1}{[\omega_0]^n}\int_X \psi \left[2\,\Div_f\Div_f\nabla\bar\nabla\chi + 2\lambda\,\Delta_f\chi + \lambda^2\left(\chi - \underline{\chi}\right)\right] e^{-f_\varphi}\omega_\varphi^n
\end{align}
\end{corollary}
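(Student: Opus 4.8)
The plan is to obtain \eqref{eq:SecondVarHKRS} as an immediate specialization of the general second variation formula \eqref{eq:SecondVarH} in Proposition~\ref{prop:SecondVarH} to the soliton case. First I would recall that $\DH(\varphi) = -\left(\Delta_f + \lambda\,\Id\right)\left(f_\varphi - \H(\varphi)\right)$ is a \emph{function} on $X$, and that saying $\omega_0$ is a gradient K\"ahler-Ricci soliton is, by the Euler-Lagrange analysis earlier in this section, exactly the statement that $\varphi(0,0)$ is a critical point of $\H$, equivalently that $\left(L_f + \lambda\,\Id\right)\left(f_{\varphi(0,0)} - \H(\varphi(0,0))\right) = 0$, equivalently $\DH(\varphi(0,0)) \equiv 0$ pointwise on $X$.

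Next I would evaluate the three terms on the right-hand side of \eqref{eq:SecondVarH} at $(s,t) = (0,0)$. In the first term $\frac{1}{[\omega_0]^n}\int_X \frac{\partial^2\varphi}{\partial s\partial t}\,\DH(\varphi)\,e^{-f_\varphi}\omega_\varphi^n$ and in the second term $-\frac{\lambda}{[\omega_0]^n}\int_X (\chi - \underline{\chi})(\psi - \underline{\psi})\,\DH(\varphi)\,e^{-f_\varphi}\omega_\varphi^n$, the factor $\DH(\varphi)$ sits inside the integrand, so each of these two integrals vanishes identically once $\DH(\varphi(0,0)) \equiv 0$. The third term is left untouched, and it is precisely the right-hand side of \eqref{eq:SecondVarHKRS}, which completes the argument.

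As this is a direct corollary of Proposition~\ref{prop:SecondVarH}, I do not expect any substantive obstacle. The one subtlety worth flagging is that the argument uses the \emph{pointwise} vanishing of $\DH$ at a critical point, not merely the identity $\int_X \DH(\varphi)\,e^{-f_\varphi}\omega_\varphi^n = 0$ which holds for every $\varphi$; this is what makes \emph{both} of the $\DH$-weighted integrals drop out rather than just one of them. One should also note that $\frac{\partial^2\varphi}{\partial s\partial t}$, $\chi$ and $\psi$ are smooth and the integrals are taken over the compact manifold $X$, so no convergence or interchange-of-limits issues arise.
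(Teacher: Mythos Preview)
Your proposal is correct and matches the paper's approach: the paper states this result as an immediate corollary of Proposition~\ref{prop:SecondVarH} without a separate proof, and your argument---setting $\DH(\varphi(0,0))\equiv 0$ pointwise in \eqref{eq:SecondVarH} to kill the first two integrals---is exactly the intended specialization.
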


\section{$\H$-Stability of K\"ahler-Ricci Solitons}
In the study of functionals in geometric analysis, the second variation formula is often associated with notions of stability. In the previous section we have computed the second variation formula of $\H$. Motivated by the second variation, we introduce:
\begin{definition}[Stability Operator]
In view of Proposition \ref{prop:SecondVarH}, we define the \emph{stability operator} $\mathcal{S}_f : T_\varphi\K \to T_\varphi\K$ by:
\begin{equation}
\label{eq:StabilityOperator}
\mathcal{S}_f(\psi) := 2\,\Div_f\Div_f\nabla\bar\nabla\psi + 2\lambda\,\Delta_f\psi + \lambda^2\left(\psi - \frac{1}{[\omega_0]^n}\int_X \psi e^{-f_\varphi}\omega_\varphi^n\right)
\end{equation}
\end{definition}

As such, the second variation of $\mathcal{H}$ at a critical point $\omega_\varphi$ is given by:
\[\left.\frac{d^2}{dt^2}\right|_{t = 0}\H(\varphi + t\psi) = -\left(\psi, \mathcal{S}_f(\psi)\right)_f\]

Since the functional $\H$ is monotonically increasing along the K\"ahler-Ricci flow, we say a K\"ahler-Ricci soliton $\omega_\varphi$ is \emph{stable with respect to $\H$} (or simply $\H$-stable) if and only if $\left.\frac{d^2}{dt^2}\right|_{t=0}\H(\varphi + t\psi) \leq 0$ for any $\psi \in T_\varphi\K$. We are ready to give the proof of our main theorem that \emph{any} K\"ahler-Ricci soliton is stable in this sense.

\begin{theorem}[$\H$-Stability]
\label{thm:Stability}
Suppose $\varphi$ is a critical point of $\H$, i.e. $\omega_\varphi := \omega_0 + \sqrt{-1}\ddbar\varphi$ is a K\"ahler-Ricci soliton, then we have:
\[\left.\frac{d^2}{dt^2}\right|_{t=0}\H(\varphi + t\psi) \leq 0\]
for any $\psi \in T_\varphi \K$, and equality holds if and only if $\nabla\psi$ is real holomorphic.
\end{theorem}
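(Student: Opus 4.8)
The plan is to diagonalize the stability operator $\mathcal{S}_f$ by means of the operators $L_f,\bar{L}_f$ together with the structural facts already in hand. At a critical point $\varphi$ the vector field $\nabla f_\varphi$ is real holomorphic, so $\nabla_{\bar i}\nabla_{\bar j}f_\varphi = 0$; hence the extra term in \eqref{eq:LLDivDiv} drops out and $2\Div_f\Div_f\nabla\bar\nabla\psi = \bar{L}_f L_f\psi$, while by Lemma \ref{lma:Commutator} the operators $L_f$ and $\bar{L}_f$ commute. Let $\psi_0 := \psi - \frac{1}{[\omega_0]^n}\int_X\psi\, e^{-f_\varphi}\omega_\varphi^n$ be the component of $\psi$ orthogonal to the constants with respect to $(\cdot,\cdot)_f$. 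Since $L_f$ and $\bar{L}_f$ annihilate constants and $2\Delta_f = L_f+\bar{L}_f$, the definition \eqref{eq:StabilityOperator} collapses to
\[\mathcal{S}_f(\psi) = \bar{L}_f L_f\psi_0 + \lambda(L_f+\bar{L}_f)\psi_0 + \lambda^2\psi_0 = (\bar{L}_f + \lambda\,\Id)(L_f + \lambda\,\Id)\psi_0,\]
and since $\mathcal{S}_f(\psi)=\mathcal{S}_f(\psi_0)$ is $(\cdot,\cdot)_f$-orthogonal to constants, the second-variation formula at a critical point gives $\left.\tfrac{d^2}{dt^2}\right|_{0}\H(\varphi+t\psi) = -\bigl(\psi_0,\,(\bar{L}_f+\lambda\,\Id)(L_f+\lambda\,\Id)\psi_0\bigr)_f$.

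For the inequality, $L_f$ and $\bar{L}_f$ are commuting elliptic operators that are self-adjoint for $(\cdot,\cdot)_f$, so I would diagonalize them simultaneously: on the orthogonal complement of the constants pick a joint orthonormal eigenbasis $\{e_\alpha\}$ with $L_f e_\alpha = -\mu_\alpha e_\alpha$, $\bar{L}_f e_\alpha = -\nu_\alpha e_\alpha$, where $\mu_\alpha,\nu_\alpha \ge \lambda$ by Lemma \ref{lma:Eigenvalue}. Writing $\psi_0 = \sum_\alpha c_\alpha e_\alpha$ yields
\[\left.\frac{d^2}{dt^2}\right|_{0}\H(\varphi+t\psi) = -\sum_\alpha |c_\alpha|^2\,(\mu_\alpha-\lambda)(\nu_\alpha-\lambda) \le 0,\]
which is the desired inequality.

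Equality forces $(\mu_\alpha-\lambda)(\nu_\alpha-\lambda)=0$, i.e.\ $\mu_\alpha=\lambda$ or $\nu_\alpha=\lambda$, for every mode $e_\alpha$ with $c_\alpha\ne 0$. The implication ``$\nabla\psi$ real holomorphic $\Rightarrow$ equality'' is immediate: a real function with real-holomorphic gradient satisfies $(L_f+\lambda\,\Id)\psi_0 = 0$ by Lemma \ref{lma:Eigenvalue}. For the converse I would use the fact that on a gradient K\"ahler--Ricci soliton the lowest eigenspace of $-L_f$ coincides with that of $-\bar{L}_f$ (equivalently: a function with vanishing $(2,0)$-Hessian has vanishing $(0,2)$-Hessian), so that every mode in $\psi_0$ in fact has $\mu_\alpha=\nu_\alpha=\lambda$; then $(L_f+\lambda\,\Id)\psi_0=0$, and Lemma \ref{lma:Eigenvalue} together with $\psi$ being real-valued (so $\nabla^{1,0}\psi_0=\nabla^{1,0}\overline{\psi_0}$) shows that $\nabla\psi=\nabla\psi_0$ is a real holomorphic vector field.

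The algebraic reduction of the first paragraph and the soft functional-analytic positivity are routine given Lemmas \ref{lma:Commutator} and \ref{lma:Eigenvalue}. The delicate point — and the place where I expect the full soliton hypothesis to be genuinely needed, rather than just $\nabla f_\varphi$ being holomorphic — is the equality case, specifically the coincidence of the lowest eigenspaces of $L_f$ and $\bar{L}_f$ on a soliton: without it the vanishing of the second variation does not immediately force $(L_f+\lambda\,\Id)\psi_0=0$, and one is left with a conclusion weaker than the real-holomorphy of $\nabla\psi$. This is the step I would work out most carefully.
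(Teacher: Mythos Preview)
Your proposal is correct and follows essentially the same route as the paper: at a soliton the last term of \eqref{eq:LLDivDiv} vanishes so that $\mathcal{S}_f(\psi)=(\bar L_f+\lambda\,\Id)(L_f+\lambda\,\Id)(\psi-\underline\psi)$, and since the two factors are self-adjoint, $\le 0$ on mean-zero functions, and commute (Lemma~\ref{lma:Commutator}), their product is $\ge 0$ via simultaneous diagonalization---exactly as you wrote. On the equality case the paper is in fact terser than you are: it simply asserts that $(\bar L_f+\lambda\,\Id)(L_f+\lambda\,\Id)(\psi-\underline\psi)=0$ is equivalent to $\nabla\psi$ being real holomorphic, without further argument, so your instinct that this is the step requiring the most care is well placed.
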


\begin{proof}
In view of
\[\left.\frac{d^2}{dt^2}\right|_{t = 0}\H(\varphi + t\psi) = -\left(\psi, \mathcal{S}_f(\psi)\right)_f\]
when $\varphi$ is a critical point of $\H$, it suffices to show the stability operator $\mathcal{S}_f$ is non-negative definite on $\K$. Since $\mathcal{S}_f$ is self-adjoint with respect to the $(\cdot, \cdot)_f$ and $\mathcal{S}_f(c) = 0$ for any constant $c$, we have $(c, \mathcal{S}_f(\psi))_f = 0$ as well and so:
\[\left(\psi, \mathcal{S}_f(\psi)\right)_f = \left(\psi - \underline{\psi}, \mathcal{S}_f(\psi - \underline{\psi})\right)_f.\]

When $\varphi$ is a K\"ahler potential such that $\omega_\varphi$ is a K\"ahler-Ricci soliton, we have $\nabla_i\nabla_j f_\varphi = \nabla_{\bar i} \nabla_{\bar j} f_\varphi = 0$ for any $i$ and $j$. Thus, the last term of \eqref{eq:LLDivDiv} in Lemma \ref{lma:DivDiv} vanishes, and we have:
\[\bar{L}_fL_f\psi = 2\,\Div_f\Div_f\nabla\bar\nabla\psi\]
for any $\psi \in T_\varphi\K$, and so:
\begin{align*}
\mathcal{S}_f(\psi) & = \bar{L}_f L_f \psi + 2\lambda\Delta\psi - 2\lambda\langle\nabla\psi, \nabla f\rangle + \lambda^2\left(\psi - \underline{\psi}\right)\\
& = \bar{L}_f L_f \psi + \lambda\left(\bar{L}_f + L_f\right)\psi + \lambda^2\left(\psi - \underline{\psi}\right)\\
& = \left(\bar{L}_f + \lambda\,\Id\right)\left(L_f + \lambda\,\Id\right)\left(\psi - \underline{\psi}\right)
\end{align*}

Note that $\bar{L}_f + \lambda\,\Id \leq 0$ and $L_f + \lambda\,\Id \leq 0$, and that they are self-adjoint and commutative at $t=0$ (from Lemma \ref{lma:Commutator}), so they can be simultaneously diagonalized and the product $\left(\bar{L}_f+\lambda\,\Id\right)\left(L_f + \lambda\,\Id\right)$ is non-negative definite. Since $\mathcal{S}_f$ is self-adjoint with respect to the $(\cdot, \cdot)_f$ and $\mathcal{S}_f(c) = 0$ for any constant $c$, we have $(c, \mathcal{S}_f(\psi))_f = 0$ as well and so:
\[\left(\psi, \mathcal{S}_f(\psi)\right)_f = \left(\psi - \underline{\psi}, \mathcal{S}_f(\psi - \underline{\psi})\right)_f \geq 0\]
since $\left(\bar{L}_f+\lambda\,\Id\right)\left(L_f + \lambda\,\Id\right) \geq 0$. It completes the proof that:
\[\left.\frac{d^2}{dt^2}\right|_{t = 0}\H(\varphi + t\psi) = -\left(\psi, \mathcal{S}_f(\psi)\right)_f = -\left(\psi - \underline{\psi}, \mathcal{S}_f(\psi - \underline{\psi})\right)_f  \leq 0.\]
Equality holds if and only if
\[\left(\bar{L}_f + \lambda\,\Id\right)\left(L_f + \lambda\,\Id\right)\left(\psi - \underline{\psi}\right) = 0,\]
which is equivalent to the fact that $\nabla\psi$ is a real holomorphic vector field.
\end{proof}

\section{Relation with Perelman's Entropy}
\label{sect:Perelman}
In this section, we focus on the case where $c_1(X) > 0$, and $\omega_0$ is a K\"ahler metric such that $\lambda\omega_0 \in c_1(X)$ (where $\lambda > 0$). Recall that Perelman's $\mathcal{W}$-functional defined by:
\[\mathcal{W}(g, f, \tau) := \int_X \left[2\tau(R+\abs{\nabla f}^2) + f - 2n\right]  \frac{e^{-f}}{(4\pi\tau)^n} \omega_g^n.\]
By taking a suitable $\tau = \tau_0$ such that $[\omega_0]^n = (4\pi\tau_0)^n$, the Perelman's $\mu$-functional is defined by:
\[\mu(g) := \inf\left\{\mathcal{W}(g, f, \tau_0) : \int_X e^{-f} \omega_g^n = (4\pi\tau_0)^n\right\}\]
The first variation of $\mu$ is given by:
\[\left.\frac{d}{dt}\right|_{t=0} \mu(g+th) = \frac{1}{(4\pi\tau_0)^n}\int_X \left\langle h, \frac{1}{2}g - \tau_0\left(\Ric + \nabla^2 f_{\min}\right)\right\rangle e^{-f_{\min}} dV_g\]
where $f_{\min}$ is the minimizer such that $\mu(g) = \mathcal{W}(g, f_{\min}, \tau_0)$.
Therefore, $g$ is a critical metric of $\mu$ if and only if $g$ is a Ricci soliton satisfying:
\[\Ric(g) + \nabla^2 f_{\min} = \frac{1}{2\tau_0}g.\]
In our case we have $\lambda\omega_0 \in c_1(X)$, so it is necessary that $\tau_0 = \frac{1}{2\lambda}$.

Our goal in this section is to show that the Perelman's $\mu$-functional is concave at K\"ahler-Ricci solitons along the direction of complex Hessians of potential functions. This result was previously proved by Tian--Zhu in \cite{TZ08} using the second variation of $\mu(g+t\nabla\bar\nabla\psi)$. Many dynamical stability results of the K\"ahler-Ricci flow were established using this results, for instance \cite{TZ08, SW,Zhe,Z13}.

We are going to show that the Boltzmann's type entropy $\H(\varphi+t\psi)$ is an upper \emph{barrier} of $\mu(g+t\nabla\bar\nabla\psi)$ up to an addition of a constant, and they coincide at $t = 0$ if $\omega_\varphi$ is a K\"ahler-Ricci soliton. Therefore, if the second variation of $\H(\varphi+t\psi)$ is non-positive at $t = 0$, then so is the second variation of $\mu(g+t\nabla\bar\nabla\psi)$, thus giving a new proof to Tian--Zhu's result.

\begin{proposition}[c.f. \cite{TZ08}]
\label{prop:PerelmanStability}
Given a gradient K\"ahler-Ricci soliton $g$ on $X$ with K\"ahler form $\omega_\varphi = \omega_0 + \sqrt{-1}\ddbar\varphi$, we have:
\[\left.\frac{d^2}{dt^2}\right|_{t=0} \mu\left(g+t\nabla\bar\nabla\psi\right) \leq \left.\frac{d^2}{dt^2}\right|_{t=0} \H(\varphi+t\psi) \leq 0.\]
Furthermore, we have $\left.\frac{d^2}{dt^2}\right|_{t=0} \mu\left(g+t\nabla\bar\nabla\psi\right) = 0$ if and only if $\nabla\psi$ is a real holomorphic vector field.
\end{proposition}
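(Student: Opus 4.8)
The plan is to exhibit the Boltzmann entropy $\H$ as an upper \emph{barrier} for Perelman's $\mu$ along deformations within the fixed K\"ahler class, touching $\mu$ (to zeroth order) at the soliton, and then to run the elementary second-derivative test at a touching point. For the barrier, fix an arbitrary $\varphi'\in\K$ and insert the normalized Ricci potential $f_{\varphi'}$ into $\mathcal{W}(\cdot,\cdot,\tau_0)$. From $\Ric(\omega_{\varphi'})=\lambda\omega_{\varphi'}-\sqrt{-1}\ddbar f_{\varphi'}$ one gets by tracing the scalar-curvature identity $R(\omega_{\varphi'})=2(n\lambda-\Delta f_{\varphi'})$, and integrating $\Delta f_{\varphi'}$ by parts against the weight $e^{-f_{\varphi'}}\omega_{\varphi'}^n$ gives $\int_X(\abs{\nabla f_{\varphi'}}^2-\Delta f_{\varphi'})e^{-f_{\varphi'}}\omega_{\varphi'}^n=0$. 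Substituting into the definition of $\mathcal{W}$ and using $2\tau_0=\lambda^{-1}$ and $(4\pi\tau_0)^n=[\omega_0]^n$, the constant $2n$, the curvature term and the $\abs{\nabla f}^2$ term assemble into a quantity with zero $e^{-f_{\varphi'}}\omega_{\varphi'}^n$-integral, leaving $\mathcal{W}(\omega_{\varphi'},f_{\varphi'},\tau_0)=\H(\varphi')$ (in general $\H(\varphi')+C$ with $C$ independent of $\varphi'$). Since the same normalization makes $\int_X e^{-f_{\varphi'}}\omega_{\varphi'}^n=(4\pi\tau_0)^n$, the function $f_{\varphi'}$ is admissible in the infimum defining $\mu$, so with $g_{\varphi'}$ the metric of $\omega_{\varphi'}$ we get $\mu(g_{\varphi'})\le\mathcal{W}(\omega_{\varphi'},f_{\varphi'},\tau_0)=\H(\varphi')$ for every $\varphi'\in\K$.

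At the given critical point $\varphi$ the soliton equation reads $\Ric(\omega_\varphi)+\nabla^2 f_\varphi=\lambda\omega_\varphi=\tfrac1{2\tau_0}\omega_\varphi$, so $(g_\varphi,f_\varphi,\tau_0)$ is precisely the data of a normalized gradient shrinking soliton; by the standard fact that the soliton potential realizes the infimum defining $\mu$ on a gradient shrinking soliton (see e.g.\ \cite{CHI,CM}), we have $\mu(g_\varphi)=\mathcal{W}(\omega_\varphi,f_\varphi,\tau_0)=\H(\varphi)$. Therefore, recalling that $g_\varphi+t\nabla\bar\nabla\psi$ is the metric of $\omega_{\varphi+t\psi}$, the function $F(t):=\H(\varphi+t\psi)-\mu(g_\varphi+t\nabla\bar\nabla\psi)$ satisfies $F\ge 0$ for small $t$ and $F(0)=0$, i.e.\ $t=0$ is an interior local minimum of $F$. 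Now $t\mapsto\H(\varphi+t\psi)$ is smooth, and $t\mapsto\mu(g_\varphi+t\nabla\bar\nabla\psi)$ is of class $C^2$ (for $c_1(X)>0$ the $\mathcal{W}$-minimizer is unique and varies smoothly with the metric, which is what makes the first and second variations of $\mu$ meaningful, cf.\ \cite{TZ08}), so $F\in C^2$ near $0$ and hence $F''(0)\ge 0$. This is the first asserted inequality $\frac{d^2}{dt^2}\big|_{t=0}\mu(g_\varphi+t\nabla\bar\nabla\psi)\le\frac{d^2}{dt^2}\big|_{t=0}\H(\varphi+t\psi)$, and the right-hand side is $\le 0$ by Theorem~\ref{thm:Stability}.

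It remains to treat the equality case. If $\frac{d^2}{dt^2}\big|_{t=0}\mu(g_\varphi+t\nabla\bar\nabla\psi)=0$, the chain of inequalities squeezes $\frac{d^2}{dt^2}\big|_{t=0}\H(\varphi+t\psi)$ to $0$, whence $\nabla\psi$ is real holomorphic by the equality clause of Theorem~\ref{thm:Stability}. Conversely, suppose $\nabla\psi$ is real holomorphic. Then it is complete and generates a one-parameter group $\Phi_t$ of biholomorphisms; since $\Phi_t\simeq\id$, the forms $\Phi_t^*\omega_\varphi$ stay in $[\omega_0]$, so $\Phi_t^*\omega_\varphi=\omega_\varphi+\sqrt{-1}\ddbar\widetilde\psi_t$ for a smooth family $\widetilde\psi_t\in\K$, and $\sqrt{-1}\ddbar\dot{\widetilde\psi}_0=\mathcal{L}_{\nabla\psi}\omega_\varphi$ is a nonzero constant multiple of $\sqrt{-1}\ddbar\psi$. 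After reparametrizing $t$ (which only rescales $\mu''(0)$ by a positive factor) the path $t\mapsto\Phi_t^*\omega_\varphi$ has the same $1$-jet at $0$ as $t\mapsto\omega_\varphi+t\sqrt{-1}\ddbar\psi$, so the two differ only at order $t^2$; and the leading contribution of that $O(t^2)$ discrepancy to $\mu$ is the first variation of $\mu$ in its direction, computed by the stated formula against the density $\tfrac12 g-\tau_0(\Ric+\nabla^2 f_\varphi)$, which vanishes identically at the soliton. Hence $\mu(g_\varphi+t\nabla\bar\nabla\psi)$ and $\mu(\Phi_t^*\omega_\varphi)$ have the same $2$-jet at $0$; but $\mu(\Phi_t^*\omega_\varphi)\equiv\mu(g_\varphi)$ by diffeomorphism invariance of $\mu$, so $\frac{d^2}{dt^2}\big|_{t=0}\mu(g_\varphi+t\nabla\bar\nabla\psi)=0$.

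The main obstacle is the $C^2$-regularity (and the first-variation formula) for $t\mapsto\mu(g_\varphi+t\nabla\bar\nabla\psi)$, which rests on uniqueness and smooth dependence of Perelman's $\mathcal{W}$-minimizer — available because $c_1(X)>0$, but needing to be quoted with care; everything else is either the self-contained computation of the first step (delicate only in the bookkeeping of the factors of $2$ relating Riemannian and complex $R$, $\Delta$, $\abs{\nabla f}^2$) or a direct consequence of Theorem~\ref{thm:Stability}.
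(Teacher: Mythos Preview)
Your barrier argument is essentially the same as the paper's: insert the Ricci potential $f_{\varphi+t\psi}$ as a competitor in the infimum defining $\mu$, compute $\mathcal{W}(g+t\nabla\bar\nabla\psi,f_{\varphi+t\psi},\tau_0)=\H(\varphi+t\psi)+\text{const}$ using $R=n\lambda-\Delta f$ and $\int(\abs{\nabla f}^2-\Delta f)e^{-f}\omega^n=0$, observe equality at $t=0$ because the soliton potential is the $\mathcal{W}$-minimizer, and read off the second-derivative inequality from the touching; the right-hand inequality is then Theorem~\ref{thm:Stability}. Your bookkeeping caveat on factors of $2$ is warranted (the paper's line $R+\abs{\nabla f}^2\to n-\Delta f+\abs{\nabla f}^2$ tacitly uses a specific normalization), and your explicit mention of $C^2$-regularity of $t\mapsto\mu$ is something the paper simply takes for granted.

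Where you genuinely add something is the \emph{converse} of the equality case. The paper's proof ends with ``follows easily from Theorem~\ref{thm:Stability}'', but that theorem together with the barrier only yields the forward direction: $\mu''(0)=0\Rightarrow\H''(0)=0\Rightarrow\nabla\psi$ real holomorphic. For the reverse implication the paper effectively relies on the Tian--Zhu formula quoted in the subsequent Remark. Your diffeomorphism-invariance argument is a clean self-contained alternative: compare $t\mapsto g_\varphi+t\nabla\bar\nabla\psi$ with $t\mapsto\Phi_t^*g_\varphi$, note they share the same $1$-jet (since $\mathcal{L}_{\nabla\psi}\omega_\varphi=dd^c\psi$ is a constant multiple of $\sqrt{-1}\ddbar\psi$), and observe that the $O(t^2)$ discrepancy enters $\mu$ only through the first variation, which vanishes at a soliton. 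This avoids invoking the explicit Tian--Zhu second-variation formula and stays within the barrier framework.
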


\begin{proof}
For any $t \in (-\varepsilon, \varepsilon)$, by the definition of $\mu$, we have:
\[
\mu\left(g + t\nabla\bar\nabla\psi\right) \leq \mathcal{W}\left(g + t\nabla\bar\nabla\psi, f_{\varphi+t\psi}, \tau_0\right)
\]
where $f_{\varphi+t\psi}$ is the Ricci potential of $\omega_{\varphi+t\psi}$. By the definition of $\mathcal{W}$, we have:
\begin{align*}
& \mathcal{W}\left(g + t\nabla\bar\nabla\psi, f_{\varphi+t\psi}, \tau_0\right)\\
 & = \frac{1}{[\omega_0]^n}\int_X \left[2\tau_0\left(R+\abs{\nabla f_{\varphi+t\psi}}^2\right) + f_{\varphi+t\psi} - 2n\right] \, e^{-f_{\varphi+t\psi}} \omega_{\varphi+t\psi}^n\\
& = \frac{1}{[\omega_0]^n}\int_X \left[2\tau_0\left(n - \Delta f_{\varphi+t\psi} + \abs{\nabla f_{\varphi+t\psi}}^2\right) + f_{\varphi + t\psi} - 2n\right] \, e^{-f_{\varphi+t\psi}} \omega_{\varphi+t\psi}^n\\
& = 2n(\tau_0 - 1) + \H(\varphi+t\psi).
\end{align*}
Here we used the fact that $\int_X \left(-\Delta f + \abs{\nabla f}^2\right) e^{-f} \omega^n = \int_X \Delta(e^{-f}) \omega^n = 0$. Therefore, for any $t \in (-\varepsilon, \varepsilon)$, we have
\begin{equation}
\label{eq:UpperBarrier}
\mu\left(g_\varphi + t\nabla\bar\nabla\psi\right) \leq \mathcal{W}\left(g + t\nabla\bar\nabla\psi, f_{\varphi+t\psi}, \tau_0\right) = \H(\varphi + t\psi) + 2n(\tau_0 - 1).
\end{equation}

At $t = 0$, we have $g + t\nabla\bar\nabla\psi = g$ and the Ricci potential $f_\varphi$ coincides with the minimizer $f_{\min}$ such that $\mu(g) = \mathcal{W}(g, f_{\min}, \tau_0)$. Therefore, we have:
\begin{equation}
\label{eq:AtTimeZero}
\mu\left(g\right) = \mathcal{W}\left(g, f_\varphi, \tau\right) = \H(\varphi) + 2n(\tau_0 - 1).
\end{equation}
Combining \eqref{eq:UpperBarrier} and \eqref{eq:AtTimeZero}, we have shown that $\H(\varphi+t\psi) + 2n(\tau_0 - 1)$ is an upper barrier of $\mu(g+t\nabla\bar\nabla\psi)$ and that they are equal at $t = 0$. Therefore, we have:

\[\eval{\frac{d^2}{dt^2}}{t=0} \mu\left(g+t\nabla\bar\nabla\psi\right) \leq \eval{\frac{d^2}{dt^2}}{t=0} \H(\varphi+t\psi).\]
The proposition then follows easily from Theorem \ref{thm:Stability}.
\end{proof}

\begin{remark}
In \cite{TZ08}, the second variation of $\mu(g+t\nabla\bar\nabla\psi)$ computed at a shrinking K\"ahler-Ricci soliton (using the notations in this article) is given by:
\[\eval{\frac{d^2}{dt^2}}{t=0} \mu\left(g+t\nabla\bar\nabla\psi\right) = \left(\psi, \; \left(L_f+\bar{L}_f+\lambda\,\Id\right)^{-1}\bar{L}_fL_f(\bar{L}_f+\lambda\,\Id)(L_f+\lambda\,\Id)\psi\right)_f \]
which is non-positive since $L_f \leq -\lambda\,\Id$ and $\bar{L}_f \leq -\lambda\,\Id$.
\end{remark}

\bibliographystyle{amsplain}
\bibliography{citations}

\end{document}